\documentclass[11pt]{amsart}
\usepackage[T1]{fontenc}
\usepackage{lmodern}
\usepackage{amsmath,amssymb,amsthm,mathtools}
\usepackage{microtype}
\usepackage{needspace}
\usepackage[a4paper,left=27mm,right=27mm,top=26mm,bottom=27mm]{geometry}
\usepackage[hidelinks]{hyperref}
\hypersetup{pdftitle={Nonnegative curvature on vector bundles over CP3 and its blow-up},pdfauthor={Shen Wen}}
\numberwithin{equation}{section}
\newtheorem{theorem}{Theorem}[section]
\newtheorem{proposition}[theorem]{Proposition}
\newtheorem{lemma}[theorem]{Lemma}
\newtheorem{corollary}[theorem]{Corollary}
\theoremstyle{definition}

\theoremstyle{remark}

\newcommand{\CP}{\mathbb{C}P}
\newcommand{\HP}{\mathbb{H}P}
\newcommand{\R}{\mathbb{R}}
\newcommand{\C}{\mathbb{C}}
\newcommand{\Z}{\mathbb{Z}}
\newcommand{\Ztwo}{\mathbb{Z}/2}
\newcommand{\Zfour}{\mathbb{Z}/4}
\newcommand{\SO}{\operatorname{SO}}
\newcommand{\SU}{\operatorname{SU}}
\newcommand{\Sp}{\operatorname{Sp}}
\newcommand{\U}{\operatorname{U}}
\newcommand{\Tot}{\operatorname{Tot}}
\newcommand{\rank}{\operatorname{rank}}
\newcommand{\codim}{\operatorname{codim}}
\newcommand{\secc}{\operatorname{sec}}
\newcommand{\eps}{\varepsilon}

\newcommand{\PSq}{\mathfrak{P}}
\newcommand{\rhofourtwo}{\rho_2^{\,4}}
\newcommand{\Vect}{\operatorname{Vect}}

\title[Nonnegative curvature on vector bundles]
{Nonnegative curvature on vector bundles over $\CP^3$ and its blow-up}
\author{Shen Wen}
\date{}
\subjclass[2020]{53C20, 57R22, 57S15}
\keywords{Nonnegative sectional curvature, vector bundle, converse soul question, cohomogeneity one, Pontryagin class, blow-up}

\address{College of Mathematics and Physics, Wenzhou University, Wenzhou, P.R.China}
\email{shenwen121212@163.com}

\begin{document}
\begin{abstract}
Every real vector bundle of rank at least seven over $\CP^3$, and every real vector bundle of rank at least eight over $\CP^3\#\CP^3$, admits a complete metric of nonnegative sectional curvature on its total space. The corresponding unit sphere bundles also admit nonnegative sectional curvature. These are fixed-rank results: no additional trivial summand is required, and there is no restriction on any characteristic class. The proof combines low-dimensional bundle classification with the Grove--Ziller lifting theory over $S^4$ and $\CP^2$. 
\end{abstract}
\maketitle

\section{Introduction}

The Soul Theorem of Cheeger and Gromoll \cite{CG72} identifies every complete open manifold of nonnegative sectional curvature with the total space of the normal bundle of a compact soul. Its converse asks which vector bundles over closed nonnegatively curved manifolds admit such a metric on their total spaces. Even when the base is a classical manifold, the topology of the bundle need not make the construction of a metric apparent.

There are two distinct forms of this question. One can seek a metric on a given bundle $\xi$, or only on a stabilization
\[
\xi\oplus\eps^k,
\]
where $\eps^k$ is a trivial real bundle and $k$ is allowed to depend on $\xi$. Gonz\'alez-\'Alvaro \cite{GA17} proved the stabilized statement for every real or complex vector bundle over a compact rank-one symmetric space. Further stable converse-soul results were obtained for positively curved homogeneous spaces by Gonz\'alez-\'Alvaro and Zibrowius \cite{GAZ21}, and for classes of cohomogeneity-one manifolds by Amann, Gonz\'alez-\'Alvaro and Zibrowius \cite{AGZ22}. These results do not, by themselves, give a metric on the original bundle in a prescribed rank. In particular, topological cancellation of a trivial summand is not a curvature-preserving operation.

We obtain fixed-rank conclusions for two six-manifolds. The first is the complex projective space $\CP^3$. The second is its one-point blow-up, whose underlying smooth manifold is diffeomorphic to $\CP^3\#\CP^3$; the orientation convention is explained in Section~\ref{subsec:projective-model}.

\begin{theorem}\label{thm:cp3}
Let $\xi\to\CP^3$ be a real vector bundle of rank $m\ge7$. Then $\Tot(\xi)$ admits a complete Riemannian metric of nonnegative sectional curvature. Its unit sphere bundle $S(\xi)$ also admits a Riemannian metric of nonnegative sectional curvature.
\end{theorem}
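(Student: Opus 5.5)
The plan is to realize each bundle over $\CP^3$ as an associated bundle of a principal bundle that carries a cohomogeneity-one structure. Then the Grove--Ziller lifting theory gives an invariant nonnegatively curved metric on the total space. The starting point is the twistor fibration $\CP^3\to S^4$ with fiber $S^2$. Equivalently, $\CP^3$ is a cohomogeneity-one manifold under $\Sp(2)$ (or $\SU(2)\times\SU(2)$ acting through $\Sp(1)\Sp(1)$), with codimension-two singular orbits. By Grove--Ziller, a principal $G$-bundle $P\to\CP^3$ admits an invariant metric of nonnegative curvature as soon as the cohomogeneity-one action lifts to $P$ with codimension-two singular orbits. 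Then $\Tot(\xi)=P\times_G\R^m$ and $S(\xi)=P\times_G S^{m-1}$ inherit nonnegative curvature by O'Neill's formula, taking the product with a rotationally symmetric nonnegatively curved metric on $\R^m$ (complete) and the round sphere. So everything reduces to showing that every rank-$m$ bundle, $m\ge7$, is associated to such a lifted principal bundle.

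\textbf{Step 1: classification.} Since $\dim\CP^3=6<m$, we are in the stable range, so rank-$m$ bundles for $m\ge7$ correspond bijectively to stable classes in $\widetilde{KO}(\CP^3)$. I will list these using $w_2$, $p_1$ and the residual invariants in $\widetilde{KO}(\CP^3)$, which is detected by $w_2$ together with $p_1$ subject to congruences. The aim is to show that every class is a sum, or a pullback-plus-twist, of a few model bundles: pullbacks $\pi^*E$ of $\Sp(1)$- or $\SO(4)$-bundles over $S^4$ under the twistor map, and line-bundle pieces $r(H^k)$ coming from $\U(1)$-bundles.

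\textbf{Step 2: lifting.} The Grove--Ziller lifting theorem already covers $\SO(3)$- and $\SO(4)$-bundles over $S^4$. Pulled back along the twistor map, these give $\SO(4)$-principal bundles over $\CP^3$ on which the $\Sp(2)$-action still lifts with codimension-two singular orbits. Similarly, the circle bundles over $\CP^3$ are homogeneous. I would take a fiber product of the $\SO(4)$-bundle with a $\U(1)$-bundle, which gives an $\SO(4)\times\U(1)\subset\SO(6)$ structure, together with one extra trivial direction. Since $4+2+1=7$, this is exactly what rank $7$ allows, and the construction realizes $\pi^*E\oplus r(H^k)\oplus\eps^1$. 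For larger $m$, add trivial summands, which preserves nonnegative curvature because one only takes a product with a flat factor.

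\textbf{Main obstacle.} The hard part will be showing that the sums $\pi^*E\oplus r(H^k)\oplus\eps^{m-6}$ exhaust all stable classes. This requires matching $w_2$ (from $k$ mod 2) and $p_1=\pi^*p_1(E)+k^2x^2$ against every admissible pair. It also requires checking that $p_1$ and $w_2$ determine the class over the 6-skeleton, with possible torsion in $\widetilde{KO}$ handled separately. I would first compute $\widetilde{KO}(\CP^3)$ via the Atiyah--Hirzebruch spectral sequence, and then solve the resulting congruences, which come from $\pi^*p_1(E)\in 2\Z x^2$ and the spin constraints. If some classes are missing, I would use $\SO(3)$-bundles over $S^4$ or the $\CP^2$-based lifting to supply extra freedom.
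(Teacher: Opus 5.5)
There is a genuine gap in the curvature step (your Step~2); the classification step is fine. Your argument needs a cohomogeneity-one action on $\CP^3$ with two codimension-two singular orbits, lifting simultaneously to $\pi^*P_E$ and to a circle bundle. No such action is supplied.

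The actions you name do not work:
$\Sp(2)$ acts transitively on $\CP^3=\Sp(2)/\U(1)\Sp(1)$, so it is not cohomogeneity one at all. The block subgroup $\Sp(1)\times\Sp(1)\subset\Sp(2)$ does act with cohomogeneity one, but its singular orbits $\{[q:0]\}$ and $\{[0:q]\}$ are $2$-spheres of codimension four. That is outside the range of Grove--Ziller's Theorem~E.

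The Grove--Ziller lifts over $S^4$ are lifts of one specific action: $\SO(3)$, or $\SU(2)$ through the double cover, conjugating traceless symmetric $3\times3$ matrices. This action is not induced from $\Sp(2)$. Covering it on $\CP^3$ gives only a $3$-dimensional group acting on a $6$-manifold. So the claim that ``the $\Sp(2)$-action still lifts'' to $\pi^*P_E$ has no basis. In fact, an $\Sp(2)$-lift commuting with $\SO(4)$ would make $\pi^*P_E$ a homogeneous bundle. In fixed rank, homogeneous bundles over $\CP^3$ have $p_1$ bounded below, so they miss every class with sufficiently negative $p_1$. Homogeneity of the circle bundles likewise gives no common action for a product principle over $\CP^3$.

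The missing idea is to form the fiber product over $S^4$ and never act on $\CP^3$ at all. Your space is in fact the right one, since $S^7\times_{\CP^3}\pi^*P_E\cong S^7\times_{S^4}P_E$.
Over $S^4$, the codimension-two $\SU(2)$-action lifts both to $P_E$ and to the quaternionic Hopf bundle $S^7\to S^4$. For the latter, lift Grove--Ziller's action on $S^7/\{\pm1\}$ using $\pi_1\SU(2)=0$. The product principle then gives a nonnegatively curved metric on $S^7\times_{S^4}P_E$ invariant under the whole structure group $\Sp(1)\times\SO(r)$. Restrict to $\U(1)\times\SO(r)$. The quotient base is $S^7/\U(1)=\CP^3$, and the representation $\C\oplus\R^r$ produces $\lambda_{\R}\oplus\pi^*E$. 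O'Neill's formula then finishes as you describe.

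What you call the main obstacle is routine. In the stable range, $\widetilde{KO}(\CP^3)\cong\Z$ is torsion-free and detected by $p_1\in\Z x^2$. Since $\pi^*p_1(E)$ already runs through all of $2\Z x^2$, a single copy of $\lambda_{\R}$, needed only for odd $p_1$, exhausts all classes.
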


\begin{theorem}\label{thm:blowup}
Let $\xi\to\CP^3\#\CP^3$ be a real vector bundle of rank $m\ge8$. Then $\Tot(\xi)$ admits a complete Riemannian metric of nonnegative sectional curvature. Its unit sphere bundle $S(\xi)$ also admits a Riemannian metric of nonnegative sectional curvature.
\end{theorem}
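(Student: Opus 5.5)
Since the statement concerns $\CP^3\#\CP^3$, the first step is to put a cohomogeneity-one structure on the base. The one-point blow-up of $\CP^3$ carries a cohomogeneity-one action of $\U(3)$, or $\Sp(1)\times\U(1)$-type subgroups thereof, with singular orbits $\CP^2$ (the exceptional divisor) and a point-type orbit. Better still for the Grove--Ziller method, it is the $\CP^1$-bundle $P(\mathcal O\oplus\mathcal O(1))\to\CP^2$. We will use the alternative description as a cohomogeneity-one manifold with both singular orbits of codimension two, built from the projective model of Section~\ref{subsec:projective-model}. By Grove--Ziller, such a manifold carries invariant nonnegative curvature, and so does any principal $G$-bundle over it to which the action lifts. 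A vector bundle $\xi$ then gets its metric as $P\times_G\R^m$, with the Cheeger deformation and O'Neill giving nonnegative curvature on both $\Tot(\xi)$ and $S(\xi)$.

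Next we classify rank-$m$ bundles over the base $M=\CP^3\#\CP^3$ for $m\ge8$. Since $m>\dim M=6$, bundles are stable, so $\xi$ is determined by its stable class in $\widetilde{KO}(M)$. Using the cell structure, with cells in dimensions $0,2,2,4,4,6$ and $H^*$ generated by $x,y$ satisfying suitable relations, $\widetilde{KO}(M)$ is detected by $w_2$, $p_1$ and a possible $\Z/2$ or $\Z$ contribution from the top cell, subject to integrality constraints. The aim is to show that every such class is realized as a sum $\xi\cong \eta_1\oplus\eta_2\oplus\eps^{k}$, where $\eta_1,\eta_2$ have low rank (complex line bundles, or rank-$3$/rank-$4$ bundles) pulled back or induced from $\SO(3)$/$\SO(4)$ principal bundles to which the cohomogeneity-one action lifts. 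The rank bound $8$, rather than $7$, presumably comes from needing $2+2+4$ or $4+4$ to hit all classes, including the top-cell contribution.

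For the lifting, we use the Grove--Ziller criterion over $S^4$ and $\CP^2$ as the paper advertises. Over $\CP^2$, every $\SO(3)$-bundle carries the lifted structure (as in Grove--Ziller's treatment of $\SO(3)$-bundles over $\CP^2$ via $\SU(3)$ cohomogeneity-one actions). We pull back along the projection $M\to\CP^2$, which is equivariant for the relevant group. For the top-dimensional class we use $\Sp(1)$-bundles over $S^4$, which carry nonnegative curvature by Grove--Ziller, pulled back via a degree-one collapse map $M\to S^6$. This fails since the target is $S^6$, not $S^4$, so instead we use an equivariant map to $S^4$ or $\HP^1$, for instance via the twistor-like map $\CP^3\to S^4$ extended across the blow-up. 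The resulting product principal bundle, with structure group $\SO(3)\times\SO(4)\times\U(1)$, is a principal bundle over $M$ with an invariant nonnegatively curved metric (products of equivariantly lifted bundles over a common cohomogeneity-one base, using the fibre product), and $\xi$ is associated to it.

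The main obstacle is the classification-and-realization step. We must verify that the characteristic classes of the available building blocks generate all of $\widetilde{KO}(M)$, including the $6$-cell obstruction, and that rank $8$ suffices without a further trivial summand. We would first compute $\widetilde{KO}(M)$ via the Atiyah--Hirzebruch spectral sequence and compare with the image of the building blocks. Any leftover top-cell class would be handled by twisting one $\SO(4)$ summand over $S^4$ by a clutching modification.
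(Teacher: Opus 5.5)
Your proposal has genuine gaps, in both the metric construction and the realization step.

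The metric construction does not fit together. To apply Grove--Ziller's codimension-two theorem to a principal bundle over $M$, you need commuting lifts of \emph{one} cohomogeneity-one action on $M$. You suggest the $\U(3)$-action on $P(\mathcal O\oplus\mathcal O(1))$. Its singular orbits are two copies of $\CP^2$, not a $\CP^2$ and a point. You never produce lifts of that action to anything. The lifts you actually invoke are Grove--Ziller's lifts over $\CP^2$, and those are lifts of the $\SO(3)$ (or $\SU(2)$) cohomogeneity-one action on $\CP^2$. That group acts on the six-manifold $M$ with cohomogeneity at least three, while $\U(3)$ induces the transitive action on $\CP^2$. So ``pull back along the equivariant map $M\to\CP^2$'' yields no nonnegatively curved principal bundle over $M$. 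Moreover, line bundles with a nonzero $y$-component are not pulled back from $\CP^2$, so they cannot enter a fiber product over $\CP^2$. Your $\SO(3)\times\SO(4)\times\U(1)$-bundle therefore has no common base on which it is built.

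The missing idea is to do the cohomogeneity-one step entirely over $\CP^2$ and pass to $M$ by a product. First, $Q=S^5\times_{\CP^2}P_\eta$ is nonnegatively curved by Lemma~\ref{lem:fiber-product}. Next, take the Escher--Ziller torus bundle $Z=S^5\times S^3\to M$, whose composite to $\CP^2$ is the Hopf map on the first factor. Then $Z\times_{\CP^2}P_\eta\cong Q\times S^3$ is a nonnegatively curved principal $T^2\times\SO(r)$-bundle over $M$. Every $(L_\chi)_{\R}$ and $\pi^*\eta$ are associated to it simultaneously.

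The topological step is left open and rests on a false input. There is no top-cell contribution, since $\pi_6(BO)=\pi_5(O)=0$. Indeed, by \v{C}adek--Van\v{z}ura, and because squaring $H^2(M;\Ztwo)\to H^4(M;\Ztwo)$ is an isomorphism, bundles of rank at least seven are classified by $p_1=Ax^2+Bxy$. So no $S^4$-pieces, twistor-type maps out of $M$, or clutching modifications are needed. More seriously, Grove--Ziller's results over $\CP^2$ do not give lifts for every $\SO(3)$-bundle. All $\SO(r)$-bundles lift for $r\ge5$, but rank four has a (spin) exceptional family and rank three is more restrictive.

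This is exactly where the bound $8$ is decided. A line bundle contributes $(rx+\ell y)^2=r^2x^2+(2r\ell-\ell^2)xy$, and $2r\ell-\ell^2\not\equiv2\pmod4$. So when $B\equiv2\pmod4$, two line summands are forced. In rank $8$ the residual is then a rank-four bundle $\eta$ over $\CP^2$ with $p_1(\eta)=Ch^2$. You must choose the weights so that $C\not\equiv2\pmod4$ (Lemma~\ref{lem:arithmetic}), which places $\eta$ outside the exceptional family (Lemma~\ref{lem:rank4-lift}). Without this mod-$4$ choice, your $2+2+4$ decomposition proves the statement only in rank nine.
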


The bounds are independent of all characteristic classes. In particular, Theorem~\ref{thm:cp3} covers the full stable rank range over $\CP^3$, including arbitrarily negative first Pontryagin classes. Theorem~\ref{thm:blowup} covers every rank-eight bundle over the blow-up and hence, in particular, every rank-nine bundle. Neither theorem asserts that its rank bound is optimal.

The main geometric input is due to Grove and Ziller. They constructed nonnegatively curved metrics on all vector bundles over $S^4$ \cite{GZ00}, using commuting lifts of a cohomogeneity-one action. Their later work \cite{GZ11} established the corresponding lifting results over $\CP^2$, including all principal $\SO(r)$-bundles for $r\ge5$ and a precise description of the rank-four exceptions. We use these results together with their product-structure-group principle, not a new cohomogeneity-one metric construction.

The additional point is a bounded-rank realization of \emph{every} bundle class on the two six-dimensional bases. In ranks at least seven, the first Pontryagin class classifies real vector bundles on either base. For the twistor projection $\pi:\CP^3\to S^4$, the classes of even parity are represented by $\pi^*\eta$, while those of odd parity are represented by
\[
\lambda_{\R}\oplus\pi^*\eta,
\]
where $\lambda$ is the Hopf line bundle. Thus the only extra summand needed to pass from $S^4$ to $\CP^3$ is one real two-plane bundle.

For the blow-up projection $\pi:M\to\CP^2$, we show that every bundle in the asserted range has a model
\begin{equation}\label{eq:intro-model}
\xi\cong\bigoplus_{j=1}^{d}(L_j)_{\R}\oplus\pi^*\eta,
\qquad d\le2,
\end{equation}
with $L_j$ complex line bundles and with the frame bundle of $\eta$ admitting the required commuting lift. The last condition is essential in rank eight: two line summands leave a rank-four bundle over $\CP^2$, and not every such frame bundle has a lift. A choice of the line-bundle weights modulo four avoids exactly the residual Pontryagin classes that would cause this difficulty. This is the refinement that yields the bound eight rather than nine.

The models in \eqref{eq:intro-model} are not used to infer that pullbacks or Whitney sums preserve nonnegative curvature. Instead, a fiber product combines the auxiliary principal bundles over the four-dimensional base. After taking a further sphere factor in the blow-up case, all summands are realized simultaneously by one orthogonal representation of a compact structure group. O'Neill's submersion formula then gives the desired metrics. We make the common acting group and the associated-bundle identifications explicit, so that no equivariant cancellation or unproved descent of a lifted action is involved.

\medskip
\noindent\emph{Conventions.}
All manifolds, group actions and bundles used in the metric constructions are smooth. Bundle isomorphisms cover the identity of the specified base. The notation $E_{\R}$ denotes the realification of a complex bundle $E$, and $\eps^r$ denotes a trivial real rank-$r$ bundle. The sphere bundle is formed using a fiber inner product; different choices give smoothly isomorphic sphere bundles. The existence assertions below do not prescribe a metric on the base.

\section{Classification by the first Pontryagin class}\label{sec:classification}

We record explicitly why the first Pontryagin class is sufficient in the ranks under consideration. This step concerns actual vector bundles, not only stable equivalence classes.

Write $\rho_j$ for reduction of integral cohomology modulo $j$, write
\[
i_*:H^*(-;\Ztwo)\longrightarrow H^*(-;\Zfour)
\]
for the map induced by $1\mapsto2$, and write $\rhofourtwo:H^*(-;\Zfour)\to H^*(-;\Ztwo)$ for the coefficient reduction induced by $\Z/4\to\Z/2$. The Pontryagin square
\[
\PSq:H^2(-;\Ztwo)\longrightarrow H^4(-;\Zfour)
\]
satisfies
\begin{equation}\label{eq:pontryagin-square}
\rhofourtwo\PSq(a)=a^2,
\qquad
\PSq(\rho_2\ell)=\rho_4(\ell^2).
\end{equation}

We use \v{C}adek--Van\v{z}ura \cite[Proposition~2]{CV92}. For a connected CW complex $X$ of dimension at most seven, the characteristic-class map
\[
[X,B\SO(7)]\longrightarrow
H^2(X;\Ztwo)\oplus H^4(X;\Ztwo)\oplus H^4(X;\Z),
\quad
\zeta\longmapsto(w_2(\zeta),w_4(\zeta),p_1(\zeta))
\]
has image the triples $(a,b,c)$ satisfying
\begin{equation}\label{eq:CV}
\rho_4(c)=\PSq(a)+i_*(b).
\end{equation}
It is injective if $H^4(X;\Z)$ has no element of order four.

\begin{proposition}\label{prop:classification}
Let $X$ be a simply connected finite CW complex of dimension at most six. Suppose that $H^3(X;\Ztwo)=0$, that $H^4(X;\Z)$ is torsion-free, and that squaring induces an isomorphism
\begin{equation}\label{eq:squaring}
H^2(X;\Ztwo)\longrightarrow H^4(X;\Ztwo),
\qquad a\longmapsto a^2.
\end{equation}
For each $m\ge7$, the first Pontryagin class gives a bijection
\[
p_1:\Vect^m_{\R}(X)\longrightarrow H^4(X;\Z),
\]
where $\Vect^m_{\R}(X)$ denotes isomorphism classes of real rank-$m$ bundles.
\end{proposition}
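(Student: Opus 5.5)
Two things need proof: rank-$m$ bundles agree with rank-$7$ bundles for $m\ge7$, and the triple $(w_2,w_4,p_1)$ is determined by $p_1$ alone, with every $c\in H^4(X;\Z)$ occurring.

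\emph{Step 1 (stable range).} The map $B\SO(m)\to B\SO(m+1)$ is $m$-connected. Since $\dim X\le6<m$ for $m\ge7$, the induced map $[X,B\SO(m)]\to[X,B\SO(m+1)]$ is a bijection. So $\Vect^m_\R(X)\cong[X,B\SO(m)]\cong[X,B\SO(7)]$; orientability is automatic because $X$ is simply connected, and the choice of orientation does not affect the isomorphism class. This bijection is compatible with $w_2,w_4,p_1$, so it suffices to treat $m=7$. Because $H^4(X;\Z)$ is torsion-free, it has no element of order four. Hence \cite[Proposition~2]{CV92} identifies $\Vect^7_\R(X)$ with the set of triples $(a,b,c)$ satisfying \eqref{eq:CV}.

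\emph{Step 2 (injectivity).} Suppose $(a,b,c)$ and $(a',b',c)$ both satisfy \eqref{eq:CV}. Apply $\rhofourtwo$ to \eqref{eq:CV}. Using $\rhofourtwo\PSq(a)=a^2$ from \eqref{eq:pontryagin-square} and $\rhofourtwo\circ i_*=0$, we get $\rho_2(c)=a^2$. The squaring map \eqref{eq:squaring} is an isomorphism, so $a$ is determined by $c$, and $a=a'$. Then $i_*(b)=i_*(b')$. Consider the Bockstein sequence for $0\to\Ztwo\to\Zfour\to\Ztwo\to0$:
\[
H^3(X;\Ztwo)\to H^4(X;\Ztwo)\xrightarrow{i_*}H^4(X;\Zfour).
\]
Since $H^3(X;\Ztwo)=0$, $i_*$ is injective in degree four, so $b=b'$.

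\emph{Step 3 (surjectivity).} This is the step I expect to need the most care. Fix $c\in H^4(X;\Z)$ and let $a$ be the unique class with $a^2=\rho_2(c)$. Then
\[
\rhofourtwo\bigl(\rho_4(c)-\PSq(a)\bigr)=\rho_2(c)-a^2=0.
\]
By exactness of $H^4(X;\Ztwo)\xrightarrow{i_*}H^4(X;\Zfour)\xrightarrow{\rhofourtwo}H^4(X;\Ztwo)$, there is some $b$ with $i_*(b)=\rho_4(c)-\PSq(a)$. So $(a,b,c)$ satisfies \eqref{eq:CV}, and by \cite[Proposition~2]{CV92} it is realized by a bundle. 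Combined with Step 1, this shows $p_1$ is a bijection for every $m\ge7$.

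The only hypotheses used are the $H^3$ vanishing (for injectivity of $i_*$), torsion-freeness of $H^4$ (for injectivity in \cite{CV92}), and the squaring isomorphism (to recover $a$ from $c$). Simple connectivity is used only for orientability.
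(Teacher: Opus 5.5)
Your proposal is correct and is essentially the paper's proof. The rank-seven argument is the same: you use \v{C}adek--Van\v{z}ura, recover $a$ from $\rho_2(c)=a^2$ via the squaring isomorphism, use $H^3(X;\Ztwo)=0$ to make $i_*$ injective, and use exactness of the coefficient sequence to produce $b$. The only difference is the passage from rank $m$ to rank seven. You use the $m$-connectivity of $B\SO(m)\to B\SO(m+1)$, while the paper splits off a trivial summand via a nowhere-zero section; that same trivial line summand is also what justifies your unproved remark that reversing the orientation does not change the oriented class.
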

\begin{proof}
Since $X$ is simply connected, every real vector bundle over $X$
is orientable. We first consider rank seven.

\smallskip
\noindent
\emph{Surjectivity.}
Let $c\in H^4(X;\Z)$ be arbitrary. Since the squaring map
\[
H^2(X;\Z/2)\longrightarrow H^4(X;\Z/2),
\qquad a\longmapsto a^2,
\]
is an isomorphism, there is a unique class
$a\in H^2(X;\Z/2)$ satisfying
\[
a^2=\rho_2(c).
\]
The defining property of the Pontryagin square gives
\[
\rho_2^4\bigl(\mathfrak P(a)\bigr)=a^2,
\]
and hence
\[
\rho_2^4\bigl(\rho_4(c)-\mathfrak P(a)\bigr)=0.
\]
By exactness of the coefficient sequence
\[
0\longrightarrow\Z/2
\xrightarrow{i}\Z/4
\longrightarrow\Z/2
\longrightarrow0,
\]
there exists
$b\in H^4(X;\Z/2)$ such that
\[
\rho_4(c)=\mathfrak P(a)+i_*(b).
\]
Thus $(a,b,c)$ satisfies the characteristic-class relation in the
theorem of \v{C}adek--Van\v{z}ura. Its existence part therefore
produces an oriented rank-seven bundle $\zeta_c$ with
\[
w_2(\zeta_c)=a,\qquad
w_4(\zeta_c)=b,\qquad
p_1(\zeta_c)=c.
\]
Since $c$ was arbitrary, $p_1$ is surjective in rank seven.

\smallskip
\noindent
\emph{Injectivity.}
Suppose that $\zeta$ and $\zeta'$ are oriented rank-seven bundles
with
\[
p_1(\zeta)=p_1(\zeta')=c.
\]
Reducing the characteristic-class relation modulo two gives
\[
w_2(\zeta)^2=\rho_2(c)=w_2(\zeta')^2.
\]
Since squaring is injective,
\[
w_2(\zeta)=w_2(\zeta').
\]
The modulo-four relation then gives
\[
i_*w_4(\zeta)=i_*w_4(\zeta').
\]
Because $H^3(X;\Z/2)=0$, the coefficient exact sequence shows that
$i_*$ is injective in degree four. Hence
\[
w_4(\zeta)=w_4(\zeta').
\]
The injectivity part of the \v{C}adek--Van\v{z}ura theorem now gives
$\zeta\cong\zeta'$.

Finally let $m>7$. Every rank-$m$ bundle over the
six-dimensional complex $X$ splits as
\[
\xi\cong\xi_7\oplus\varepsilon^{m-7},
\]
because a real bundle of rank greater than $\dim X$ has a
nowhere-zero section. Moreover, $p_1(\xi)=p_1(\xi_7)$. Therefore, the first Pontryagin class determines the isomorphism classes of  rank-$m$ bundles over $X$.
\end{proof}


We will also use the elementary classification of complex rank-two bundles over a four-dimensional CW complex. The universal Chern classes define a map
\[
(c_1,c_2):BU(2)\longrightarrow K(\Z,2)\times K(\Z,4).
\]
In degrees at most four,
\[
\pi_i(BU(2))=
\begin{cases}
\Z,&i=2,4,\\
0,&i=1,3,
\end{cases}
\]
and the two Chern classes induce the corresponding isomorphisms: $c_1$ does so on $\pi_2$, while a generator $S^4\to BU(2)$ classifies a rank-two bundle with $c_2=\pm1$, so $c_2$ does so on $\pi_4$. The target has trivial $\pi_5$, hence $(c_1,c_2)$ is $5$-connected. Obstruction theory therefore gives, for every CW complex $Y$ of dimension at most four,
\[
[Y,BU(2)]\xrightarrow{\ \cong\ }
H^2(Y;\Z)\times H^4(Y;\Z),
\qquad [E]\longmapsto(c_1(E),c_2(E)).
\]
Thus every prescribed pair $(c_1,c_2)$ is realized by a complex rank-two bundle. We shall use the standard realification formula
\begin{equation}\label{eq:realification}
p_1(E_{\R})=c_1(E)^2-2c_2(E)
\end{equation}
and the Whitney sum formula for $p_1$ \cite{MS74}.

\Needspace{8\baselineskip}
\begin{lemma}\label{lem:four-dimensional-existence}
The following existence statements hold.
\begin{enumerate}
\item If $u$ generates $H^4(S^4;\Z)$, then for each $a\in\Z$ and $r\ge4$ there is an oriented real rank-$r$ bundle $\eta\to S^4$ with $p_1(\eta)=2au$.
\item If $h$ generates $H^2(\CP^2;\Z)$, then for each $C\in\Z$ and $r\ge4$ there is an oriented real rank-$r$ bundle $\eta_C\to\CP^2$ with $p_1(\eta_C)=Ch^2$.
\end{enumerate}
\end{lemma}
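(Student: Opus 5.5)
The natural approach is to use the classification of complex rank-two bundles over four-dimensional complexes recorded just above, together with the realification formula \eqref{eq:realification}. Every pair $(c_1,c_2)$ is realized by a complex rank-two bundle $E$, and $E_{\R}$ is then an oriented real rank-four bundle (with its complex orientation). For ranks $r>4$ we add a trivial summand: $\eta=E_{\R}\oplus\eps^{r-4}$. This is still oriented, and by the Whitney sum formula $p_1(\eta)=p_1(E_{\R})$, since the trivial bundle has vanishing Pontryagin classes. So it suffices to produce rank-four bundles with the prescribed $p_1$.

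For part (1), take $Y=S^4$. Here $H^2(S^4;\Z)=0$, so $c_1=0$, and we choose $c_2(E)=-au$. Then \eqref{eq:realification} gives $p_1(E_{\R})=0-2(-au)=2au$. Hence every even multiple of $u$ is attained, which is exactly what (1) asserts.

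For part (2), take $Y=\CP^2$ and write $c_1(E)=kh$, $c_2(E)=nh^2$ with $k,n\in\Z$. Then $p_1(E_{\R})=(k^2-2n)h^2$. Given $C\in\Z$:
\begin{itemize}
\item if $C$ is even, choose $k=0$ and $n=-C/2$;
\item if $C$ is odd, choose $k=1$ and $n=(1-C)/2$, which is an integer.
\end{itemize}
In both cases $k^2-2n=C$, so the bundle $\eta_C=E_{\R}\oplus\eps^{r-4}$ has $p_1(\eta_C)=Ch^2$.

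No step is a real obstacle. The only point to check is the parity bookkeeping in (2): the term $c_1^2$ must supply the odd part of $C$, which it can because $h^2$ generates $H^4(\CP^2;\Z)$. Orientability and the rank condition $r\ge4$ are automatic from the construction.
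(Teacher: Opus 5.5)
Your proposal is correct and is essentially the paper's proof. You realify a complex rank-two bundle with $c_1=0$, $c_2=-au$ over $S^4$, and with $c_1=\delta h$, $c_2=\tfrac{\delta-C}{2}h^2$ (where $\delta\in\{0,1\}$ is the parity of $C$) over $\CP^2$, then add $\eps^{r-4}$; this matches the paper's choices exactly.
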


\begin{proof}
For the first assertion, take a complex rank-two bundle with $c_1=0$ and $c_2=-au$, realify it, and add $r-4$ trivial real lines. For the second, let $\delta\in\{0,1\}$ be congruent to $C$ modulo two and choose a complex rank-two bundle $E_C$ with
\begin{equation}\label{eq:EC}
c_1(E_C)=\delta h,
\qquad
c_2(E_C)=\frac{\delta-C}{2}h^2.
\end{equation}
The second coefficient is integral. Since $\delta^2=\delta$, formula~\eqref{eq:realification} gives $p_1((E_C)_{\R})=Ch^2$. Set $\eta_C=(E_C)_{\R}\oplus\eps^{r-4}$.
\end{proof}

\section{Commuting lifts and associated metrics}\label{sec:geometry}

\subsection{Quotients and curvature}
For a principal right $K$-bundle $P$ and an orthogonal representation $\rho:K\to O(V)$, our associated-bundle convention is
\begin{equation}\label{eq:associated-convention}
P\times_KV=(P\times V)/\!\sim,
\qquad
(p,v)\sim(pk,\rho(k)^{-1}v).
\end{equation}

\begin{lemma}\label{lem:quotient-curvature}
Let a compact Lie group $K$ act freely and isometrically on a compact Riemannian manifold $P$ with $\secc\ge0$. For every orthogonal $K$-representation $V$, the associated total space $P\times_KV$ admits a complete metric with $\secc\ge0$. The space $P\times_KS(V)$ also admits a metric with $\secc\ge0$ where $S(V)\subset V$ is the unit sphere.
\end{lemma}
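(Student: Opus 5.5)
The plan is to realize $P\times_KV$ as the base of a Riemannian submersion from a nonnegatively curved product and then apply O'Neill's formula; the sphere bundle is handled in exactly the same way.

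\emph{Step 1 (the vector bundle).} I would choose an $O(V)$-invariant metric $g_\mathrm{rot}$ on $V$ with $\secc\ge0$, so that it is complete and rotationally symmetric. A concrete choice is a warped product $dt^2+f(t)^2\,ds^2_{S(V)}$ with $f$ concave, $f(0)=0$, $f'(0)=1$, odd at $0$, and $f$ constant for large $t$. This gives a smooth metric that is asymptotically a cylinder $S(V)\times[0,\infty)$. The flat Euclidean metric also works, but the rotationally symmetric one is what is needed later. On $P\times V$ I would take the product metric $g_P+g_\mathrm{rot}$. It has $\secc\ge0$, being a product of nonnegatively curved factors, and it is complete because $P$ is compact and $g_\mathrm{rot}$ is complete.

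\emph{Step 2 (the action).} The group $K$ acts on $P\times V$ by $(p,v)\cdot k=(pk,\rho(k)^{-1}v)$, matching the convention \eqref{eq:associated-convention}. This action is isometric, because $K$ acts isometrically on $P$ and $\rho(k)\in O(V)$ preserves $g_\mathrm{rot}$. It is free, because the action on the first factor is already free. It is proper, because $K$ is compact. Hence the quotient $P\times_KV$ is a smooth manifold and the projection is a Riemannian submersion for the induced metric. O'Neill's formula $\secc_B(X,Y)=\secc(\tilde X,\tilde Y)+\tfrac34|[\tilde X,\tilde Y]^{\mathcal V}|^2\ge0$ then gives $\secc\ge0$ on the quotient.

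\emph{Step 3 (completeness).} Completeness of the quotient follows because horizontal lifts of geodesics exist for all time: a Riemannian submersion from a complete manifold has a complete base. Alternatively, the quotient distance is the orbit distance, and Cauchy sequences lift.

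\emph{Step 4 (the sphere bundle).} I would repeat the argument with $S(V)$ carrying the round metric. This metric has $\secc\ge0$, including the case $\dim V=1$, where $S(V)$ is two points, and the case $\dim V=2$, where it is a flat circle. The product $P\times S(V)$ is compact with $\secc\ge0$, and $K$ acts freely and isometrically on it. O'Neill's formula again gives $\secc\ge0$ on $P\times_KS(V)$.

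The main point needing care is Step~2. Freeness of the diagonal action is what is actually used, and it comes from $P$ alone, so no condition on $\rho$ is needed. The only other subtlety is that the standard flat metric already suffices for completeness; the warped choice in Step~1 is optional.
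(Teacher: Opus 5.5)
Your proposal is correct and follows the paper's proof: the product metric on $P\times V$ (respectively $P\times S(V)$), the free isometric diagonal action, O'Neill's formula, and completeness of a quotient of a complete manifold by a compact isometry group. The warped rotationally symmetric metric in Step~1 is unnecessary, because the paper uses the flat Euclidean metric on $V$, as you note at the end. You should therefore drop the earlier claim that it is ``needed later.''
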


\begin{proof}
Equip $P\times V$ with the product of the given metric and the Euclidean metric. The diagonal action from \eqref{eq:associated-convention} is free and isometric. The quotient is a Riemannian submersion. For orthonormal horizontal vectors $X,Y$, O'Neill's formula \cite{ON66} reads
\[
K_{\mathrm{quot}}(d\pi X,d\pi Y)
=K_{P\times V}(X,Y)+3\lVert A_XY\rVert^2\ge0.
\]
The product is complete, and its quotient by a compact group of isometries is complete. For the second assertion, apply the same argument to $P\times S(V)$, where $S(V)\subset V$ carries the standard metric induced by the Euclidean inner product on $V$ (so $\secc\equiv1$ on $S(V)$).
\end{proof}


\subsection{The product principle}
A \emph{commuting lift} of a $G$-action on $B$ to a principal right $L$-bundle $p:P\to B$ is a $G$-action on $P$ satisfying
\[
p(gq)=g\,p(q),\qquad g(q\ell)=(gq)\ell.
\]
A compact group action has cohomogeneity one when its orbit space is one-dimensional. We use actions whose orbit space is an interval and whose two singular orbits have codimension two. Grove--Ziller \cite[Theorem~E]{GZ00} proved that a compact cohomogeneity-one manifold with these orbit codimensions admits an invariant metric of nonnegative sectional curvature. Ineffective kernels cause no problem: one may pass to the effective quotient without changing the orbits or invariant metrics.

\begin{lemma}[Grove--Ziller product principle]\label{lem:fiber-product}
Let $B$ carry a cohomogeneity-one action of a compact connected group $G$ with two singular orbits of codimension two. Let $P_i\to B$ be principal bundles with compact connected structure groups $L_i$, and suppose that the same $G$-action admits commuting lifts to both bundles. Then
\[
Q=P_1\times_B P_2
\]
admits a $G\times L_1\times L_2$-invariant metric with nonnegative sectional curvature.
\end{lemma}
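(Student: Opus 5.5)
The plan is to show that $G$ acts by cohomogeneity one on $Q$ with the same orbit space and singular-orbit codimensions as on $B$, and then invoke Grove--Ziller \cite[Theorem~E]{GZ00} for the enlarged group $G\times L_1\times L_2$. The point is that $Q$ is a principal $L_1\times L_2$-bundle over $B$, and the two commuting lifts combine into a single commuting lift to $Q$.

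\emph{Step 1: the action on $Q$.} Let $p_i:P_i\to B$ be the projections, so that
\[
Q=\{(q_1,q_2):p_1(q_1)=p_2(q_2)\}.
\]
Then $Q$ is a smooth principal $L_1\times L_2$-bundle over $B$, with $(\ell_1,\ell_2)$ acting on the right factorwise. Define
\[
g(q_1,q_2)=(gq_1,gq_2).
\]
This preserves $Q$ because $p_i(gq_i)=g\,p_i(q_i)$. It commutes with the right action because each lift does. Setting $(g,\ell_1,\ell_2)\cdot(q_1,q_2)=(gq_1\ell_1^{-1},gq_2\ell_2^{-1})$ therefore gives a smooth action of the compact connected group $\hat G=G\times L_1\times L_2$ on the compact manifold $Q$.

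\emph{Step 2: cohomogeneity one with codimension-two singular orbits.} The $L_1\times L_2$-orbits are exactly the fibers of $Q\to B$, and the projection is $\hat G$-equivariant, with $\hat G$ acting on $B$ through $G$. Hence $\hat G$-orbits in $Q$ are the full preimages of $G$-orbits in $B$, and
\[
Q/\hat G\cong B/G
\]
is an interval. The preimage of an orbit $G\cdot b$ is a bundle over $G\cdot b$ with fiber $L_1\times L_2$. Its codimension in $Q$ therefore equals the codimension of $G\cdot b$ in $B$. So principal orbits of $\hat G$ lie over principal orbits of $G$ and have codimension one, while the two singular orbits of $\hat G$ lie over the two singular orbits of $G$ and have codimension two.

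To make the isotropy statements precise, fix $q=(q_1,q_2)$ over $b$. The stabilizer $\hat G_q$ projects onto $G_b$. Indeed, for $g\in G_b$ the point $gq_i$ lies in the fiber over $b$, so $gq_i=q_i\ell_i$ for a unique $\ell_i$. Thus $\hat G_q\cong G_b$ via $g\mapsto(g,\ell_1(g),\ell_2(g))$, and in particular the group diagrams of $Q$ and $B$ have the same slice representations. The main obstacle is exactly this check that the lifted orbit structure really has codimension-two singular orbits rather than just an equivariant map to $B$; the displayed identification of isotropy groups handles it, and the stated remark about ineffective kernels covers any kernel.

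\emph{Step 3: conclusion.} Apply Grove--Ziller \cite[Theorem~E]{GZ00} to the cohomogeneity-one $\hat G$-manifold $Q$. This gives a $\hat G$-invariant metric with $\secc\ge0$, and invariance under $\hat G$ is precisely $G\times L_1\times L_2$-invariance.
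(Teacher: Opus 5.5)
Your proposal is correct and follows the paper's proof essentially step for step. The steps match: the joint action $(g,\ell_1,\ell_2)(q_1,q_2)=(gq_1\ell_1^{-1},gq_2\ell_2^{-1})$ on the fiber product, the identification of joint orbits with full preimages of $G$-orbits (so that $Q/(G\times L_1\times L_2)=B/G$ and codimensions are preserved), and then an application of \cite[Theorem~E]{GZ00}. Your description of the isotropy groups as graphs of homomorphisms $G_b\to L_1\times L_2$ is a correct refinement that the paper omits (it cites \cite[Lemma~1.6(c)]{GZ11} instead), though it is not needed for the codimension count.
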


\begin{proof}
This is the direct-product construction in \cite[Lemma~1.6(c)]{GZ11}, together with the codimension-two metric theorem. We include the orbit calculation. The fiber product is a principal $L_1\times L_2$-bundle, and the joint left action is
\[
(g,\ell_1,\ell_2)(q_1,q_2)
=(gq_1\ell_1^{-1},gq_2\ell_2^{-1}).
\]
If $\mathcal O$ is a $G$-orbit in $B$, its inverse image is a single joint orbit: first move between the base points using $G$, and then move within the fiber using $L_1\times L_2$. Hence $Q/(G\times L_1\times L_2)=B/G$. Moreover,
\[
\begin{aligned}
\codim_Q p^{-1}(\mathcal O)
&=(\dim B+\dim L_1+\dim L_2)
 -(\dim\mathcal O+\dim L_1+\dim L_2)\\
&=\codim_B\mathcal O.
\end{aligned}
\]
The two singular orbits in $Q$ therefore have codimension two, so \cite[Theorem~E]{GZ00} applies. In particular, the resulting metric is invariant under the full right structure-group action.
\end{proof}

We will restrict this invariant metric to closed subgroups of the structure group and then apply Lemma~\ref{lem:quotient-curvature}. 

\subsection{A common acting group}\label{subsec:lifts}
The finite covering of the acting group must be distinguished from a finite covering of the structure group. Throughout both applications we use
\begin{equation}\label{eq:acting-group}
G=\SU(2),\qquad \nu:G\longrightarrow\SO(3),
\end{equation}
where $\nu$ is the standard double covering. The actions on the four-dimensional bases factor through $\nu$; their lifts need not do so. This is consistent with the covering convention preceding \cite[Lemma~1.6]{GZ11}.

On $S^4$, identify $\R^5$ with the traceless symmetric real $3\times3$ matrices and use the action $X\mapsto\nu(g)X\nu(g)^{-1}$ on the unit sphere. Generic matrices have finite stabilizer; matrices at the two ends of the orbit interval have a repeated eigenvalue. The corresponding singular orbits are two-dimensional.

On $\CP^2$, use
\begin{equation}\label{eq:CP2-action}
g\cdot[z]=[\nu(g)z],\qquad z\in\C^3\setminus\{0\}.
\end{equation}
Its singular orbits are the real locus $\R P^2$ and the conic $\{[z]:z_1^2+z_2^2+z_3^2=0\}$, both of real dimension two \cite[Section~2]{GZ11}. Thus both base actions have cohomogeneity one and singular-orbit codimensions two. The action \eqref{eq:CP2-action} is \emph{not} the block $\SU(2)$-action on $\C\oplus\C^2$, which has a fixed point in $\CP^2$.

\begin{proposition}\label{prop:lifts}
For the $G$-actions just specified:
\begin{enumerate}
\item the action on $S^4$ has a commuting lift to every principal $\SO(r)$-bundle for $r\ge4$, and to the quaternionic Hopf bundle $\Sp(1)\to S^7\to S^4$;
\item the action on $\CP^2$ has a commuting lift to every principal $\SO(r)$-bundle for $r\ge5$, and to the Hopf bundle $S^1\to S^5\to\CP^2$.
\end{enumerate}
\end{proposition}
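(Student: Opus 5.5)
The plan is to obtain each assertion from Grove--Ziller, using explicit constructions only where a bundle is not stated verbatim in \cite{GZ00,GZ11}. Two reductions keep this short.

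\textbf{Enlarging the structure group.} If $G$ lifts commutingly to a principal $L$-bundle $P$ and $\varphi:L\to L'$ is a homomorphism, then $G$ lifts to $P\times_{L}L'$. The lift is $g[q,\ell']=[gq,\ell']$. It is well defined because the lift commutes with the right $L$-action, and it commutes with right multiplication by $L'$.

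\textbf{Acting-group conventions.} I will check that every lift used is a lift of the action of $G=\SU(2)$ through $\nu$ as specified in \S\ref{subsec:lifts}, possibly after passing to a finite cover. This matters because \cite{GZ00} phrase lifts for an $\SO(3)$-action, with lifts allowed to factor through a covering group. Since $\SU(2)$ is simply connected and is the universal cover of $\SO(3)$, any such lift can be pulled back to $G$.

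\textbf{Part (1).} I would first recall the main theorem of \cite{GZ00}. Every principal $\Sp(1)\times\Sp(1)$-bundle over $S^4$ (the Milnor bundles $P_{k,l}$) carries a cohomogeneity-one action of $\SO(3)\times\SO(4)$, or a cover of it. The $\SO(3)$-factor covers the action on $S^4\subset\{\text{traceless symmetric }3\times3\}$, and the $\SO(4)$-factor includes the principal right action. This is precisely a commuting lift of our $G$-action.

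Taking $\Sp(1)\times\Sp(1)\to\SO(4)$ and then $\SO(4)\subset\SO(r)$, the enlargement reduction gives lifts to every $P_{k,l}\times_{\Sp(1)\times\Sp(1)}\SO(r)$. These exhaust all principal $\SO(r)$-bundles over $S^4$, $r\ge4$. For $r=4$ this is Milnor's classification. For $r\ge5$, such bundles are classified by $\pi_3(\SO(r))\cong\Z$, and the stabilization $\pi_3(\SO(4))\to\pi_3(\SO(r))$ is onto.

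For the quaternionic Hopf bundle, I would use the projection $\Sp(1)\times\Sp(1)\to\Sp(1)$ onto one factor. Applied to the appropriate $P_{k,l}$ (one with $\Sp(1)$-reduction of Euler class $\pm1$), it yields $S^7\to S^4$ with a commuting lift.

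\textbf{Part (2).} For the Hopf bundle over $\CP^2$ the lift is explicit: $g\cdot z=\nu(g)z$ on $S^5\subset\C^3$. Since $\nu(g)\in\SO(3)\subset\U(3)$ is complex-linear, this commutes with scalar multiplication by $S^1$ and covers \eqref{eq:CP2-action}.

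For principal $\SO(r)$-bundles with $r\ge5$, I would cite the theorem of \cite{GZ11} that every such bundle over $\CP^2$ admits a lift of this action. As a fallback I would reconstruct it as follows.

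\begin{itemize}
\item \textbf{Classification.} Bundles with $r\ge5$ over the four-complex $\CP^2$ are determined by $(w_2,p_1)$, subject to $\rho_4(p_1)=\PSq(w_2)$. Here $w_4$ is forced by this relation, as in Proposition~\ref{prop:classification}.
\item \textbf{Realization in rank four.} Each such pair is realized by some $\SO(4)$-bundle, or by an $\SO(3)$-bundle plus a trivial line. This is essentially Lemma~\ref{lem:four-dimensional-existence}.
\item \textbf{Transfer to rank $r$.} GZ11 show that all $\SO(3)$-bundles, and all $\SO(4)$-bundles except an explicit family, admit lifts. The enlargement reduction then transfers the lift to rank $r$, once a liftable representative in rank four is chosen for each $(w_2,p_1)$.
\end{itemize}

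\textbf{Main obstacle.} This last step is where I expect the difficulty. I must check that for every admissible $(w_2,p_1)$ some $\SO(3)$-bundle or non-exceptional $\SO(4)$-bundle has these invariants. The plan is to use $\SO(3)$-bundles (rank three plus a trivial line) where GZ11 give lifts for all of them. In particular, for the class $w_2\ne0$ the invariants $p_1\equiv1\pmod4$ are covered by $\SO(3)$-bundles.

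The remaining residues would be handled by non-exceptional $\SO(4)$-bundles read off from the GZ11 list. If any residue class is missed, I would simply rely on the direct GZ11 statement for $r\ge5$.
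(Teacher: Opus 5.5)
Your main line matches the paper's. Over $S^4$ you use Grove--Ziller in rank four and then extend the structure group; your surjectivity of $\pi_3(\SO(4))\to\pi_3(\SO(r))$ replaces the paper's splitting $E_4\oplus\eps^{r-4}$. Over $\CP^2$ you use the linear lift $z\mapsto\nu(g)z$ on $S^5$ and \cite[Corollary~4.11(c)]{GZ11} for $r\ge5$.

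The one genuinely different step is the quaternionic Hopf bundle.
\begin{itemize}
\item The paper takes the lift to the principal $\SO(3)$-bundle $S^7/\{\pm1\}$ from \cite[Theorem~F]{GZ00}. It then lifts this action through the double cover $S^7\to S^7/\{\pm1\}$, using that $G\times S^7$ is simply connected and that lifts are unique.
\item You instead push a lift on a principal $\Sp(1)\times\Sp(1)$-bundle forward along a projection $\Sp(1)\times\Sp(1)\to\Sp(1)$, i.e.\ you pass to the quotient by the kernel factor.
\end{itemize}
Your route avoids covering-space arguments. But it needs the Grove--Ziller lift on the $\Sp(1)\times\Sp(1)$-bundle itself, not only on its $\SO(4)$-quotient. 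If you read \cite[Theorem~F]{GZ00} as a statement about $\SO(3)$- and $\SO(4)$-bundles, you must first lift through the double cover $P\to P/\{\pm(1,1)\}$, which is the paper's argument again. You should also choose $(k,l)$ so that the projected bundle is $S^7\to S^4$ itself, not the bundle with the opposite $c_2$ over the same $S^4$. This is possible since every $(k,l)$ occurs.

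In Part (2), the paper notes that \cite[Corollary~4.11(c)]{GZ11} is stated after excluding torus reductions, and it treats those separately. Your ingredients already cover them: enlarge the Hopf lift along $z\mapsto z^n$, take fiber products of lifts, then enlarge to $\SO(r)$. You should say so explicitly.

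Your fallback, by contrast, does not work as described and is best dropped.
\begin{itemize}
\item Its premise that \cite{GZ11} lift every principal $\SO(3)$-bundle over $\CP^2$ is not what they prove. In rank three the lifting conditions are restrictive; cf.\ \cite[Theorem~C and Corollary~4.11(a)]{GZ11}.
\item Your relation should read $\rho_4(p_1)=\PSq(w_2)+i_*(w_4)$. Hence in rank $\ge5$ every $p_1=Ch^2$ occurs, with $w_2=\rho_2(Ch)$.
\end{itemize}
The class $w_2=0$, $C\equiv2\pmod4$ (for instance $C=6$) is then reached by none of your candidates:
\begin{itemize}
\item no spin $\SO(3)$-bundle, since these have $C\equiv0\pmod4$;
\item no non-exceptional spin $\SO(4)$-bundle, since these have $p_1=2(\kappa+\lambda)h^2$ with $\kappa,\lambda$ even;
\item no spin torus-reducible $\SO(4)$-bundle, since $6$ is not a sum of two odd squares.
\end{itemize}
This is exactly the residue that Lemma~\ref{lem:arithmetic} and Lemma~\ref{lem:rank4-lift} are built to avoid. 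So the direct citation of \cite[Corollary~4.11(c)]{GZ11} is indispensable here, as in the paper.
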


\begin{proof}
For $S^4$, the rank-four assertion is \cite[Theorem~F]{GZ00}, pulled back to $G$ if necessary. A higher-rank real bundle over a four-dimensional base splits as $E_4\oplus\eps^{r-4}$. Its frame bundle is an extension
\[
P_4\times_{\SO(4)}\SO(r).
\]
A commuting lift extends by $g[p,A]=[gp,A]$, as in \cite[Lemma~1.6(a)]{GZ11}.

For the quaternionic Hopf bundle, put $P_0=S^7$ and $\overline P_0=P_0/\{\pm1\}$. The latter is a principal $\SO(3)$-bundle over $S^4$. \cite[Theorem~F]{GZ00} gives a commuting lift to $\overline P_0$, which we regard as a $G$-action. Because $G$ is simply connected, the map
\[
G\times P_0\longrightarrow\overline P_0,
\qquad (g,q)\longmapsto g\cdot\overline q
\]
lifts uniquely to $P_0$ after fixing its value at $(1,q_0)$. Its restriction to $\{1\}\times P_0$ is the identity. The action law follows from uniqueness of lifts on the connected space $G\times G\times P_0$. The two maps expressing commutation with right $\Sp(1)$-multiplication are likewise lifts of the same map to $\overline P_0$; they agree at the identity and hence agree everywhere. This yields the required commuting $G$-action on $P_0$. It is the special case of \cite[Lemma~1.6(b)]{GZ11} needed here. A linear realization is also described in \cite[Remark~4.6]{GZ00}. No descent of this lift to $\SO(3)$ is asserted or needed.

For $\CP^2$, the frame-bundle assertion is \cite[Corollary~4.11(c)]{GZ11}, with its standing convention that the acting group may be replaced by its simply connected cover. The corollary is stated after excluding torus reductions, which already admit lifts. In the present case this last assertion is direct: every line bundle over $\CP^2$ is a power of the Hopf line bundle, and $G$ acts linearly on $S^5\subset\C^3$ through $\nu$, commuting with the Hopf circle. The induced lifts to powers, products and extensions of structure group handle torus reductions. The same linear action is the claimed lift to $S^5$ itself.
\end{proof}

\subsection{The pullback identification}
The following elementary fact keeps the bundle identification separate from the metric construction.

\begin{lemma}\label{lem:pullback}
Let $P\to B$ and $R\to B$ be principal right $H$- and $L$-bundles, respectively, let $T\subset H$ be closed, and put $Q=P\times_B R$. Let $V$ be an $L$-representation, with $T$ acting trivially, and set $K=T\times L$. Then $Q/K\cong P/T$, and over this base there is a canonical vector-bundle isomorphism
\begin{equation}\label{eq:pullback-identification}
Q\times_KV\cong(P/T)\times_B(R\times_LV)=\pi^*(R\times_LV),
\end{equation}
where $\pi:P/T\to B$ is the induced projection. If $W$ is a $T$-representation on which $L$ acts trivially, then also
\begin{equation}\label{eq:W-identification}
Q\times_KW\cong P\times_TW
\end{equation}
as bundles over $P/T$.
\end{lemma}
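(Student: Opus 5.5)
The plan is to write down explicit maps on the level of principal bundles and check that they descend and are fiberwise linear isomorphisms. First I will identify the base. The group $K=T\times L$ acts freely on $Q=P\times_BR$ by $(p,r)(t,\ell)=(pt,r\ell)$. Consider the map $Q\to P/T$ given by $(p,r)\mapsto pT$. It is $K$-invariant and surjective. Its fibers are single $K$-orbits: if $pT=p'T$, then $p'=pt$ for some $t\in T$. Both $r$ and $r'$ then lie over the same point of $B$, so $r'=r\ell$ for some $\ell\in L$. Hence the map induces a continuous bijection $Q/K\to P/T$. It is a diffeomorphism, since both sides are quotients by free proper actions and the map is a submersion; locally it is the product of the two local trivializations.

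Next I will define
\[
\Phi:Q\times V\to(P/T)\times_B(R\times_LV),\qquad ((p,r),v)\mapsto(pT,[r,v]).
\]
This map is well defined on the target: $pT$ and $[r,v]$ lie over the same point of $B$ because $p$ and $r$ do. To see that $\Phi$ is $K$-invariant, recall that $(p,r,v)\sim(pt,r\ell,\rho(t,\ell)^{-1}v)$, and $T$ acts trivially on $V$. So $\Phi$ of the second triple is $(ptT,[r\ell,\rho(\ell)^{-1}v])=(pT,[r,v])$. Thus $\Phi$ descends to a smooth map $\bar\Phi:Q\times_KV\to\pi^*(R\times_LV)$ covering the identity of $P/T$. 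On each fiber it is linear. It is surjective, since any $(pT,[r,v])$ with matching base points comes from $(p,r,v)$.

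For injectivity, suppose $(pT,[r,v])=(p'T,[r',v'])$. Then $p'=pt$, $r'=r\ell$ and $v'=\rho(\ell)^{-1}v$, which places the two triples in the same $K$-class. A fiberwise bijective linear bundle map between vector bundles over the same base is an isomorphism; smoothness of the inverse follows from local triviality. Finally, identifying $(P/T)\times_B(R\times_LV)$ with $\pi^*(R\times_LV)$ is just the definition of the pullback. Canonicity is clear, since no choices were made.

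For \eqref{eq:W-identification} I will use the analogous map $((p,r),w)\mapsto[p,w]\in P\times_TW$. Invariance now uses the fact that $L$ acts trivially on $W$: $(pt,r\ell,\rho(t)^{-1}w)\mapsto[pt,\rho(t)^{-1}w]=[p,w]$. The map lies over $pT$, so it covers the identity of $P/T$. It is surjective because every $p$ has some $r$ in the same $B$-fiber, as $R\to B$ is surjective. It is injective by the same orbit argument: $[p,w]=[p',w']$ gives $p'=pt$ and $w'=\rho(t)^{-1}w$, and the $R$-components differ by some $\ell$.

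The only real subtlety is the smooth-structure bookkeeping. One must know that $Q$ is a manifold and $Q\to Q/K$ a principal bundle, which follows from $Q$ being the pullback of $R$ along $P\to B$. One must also know that the descended maps are diffeomorphisms rather than just bijections. I would settle both by working in local trivializations of $P$ and $R$ over a common open set $U\subset B$. There, everything becomes a product and the maps become identities in the $U$ and $V$ (or $W$) factors.
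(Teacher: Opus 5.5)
Your proposal is correct and follows essentially the same route as the paper. It uses the same map $((p,r),v)\mapsto(pT,[r,v])$ (and $((p,r),w)\mapsto[p,w]$ for $W$), checks that it is constant on $K$-classes because $T$ (resp.\ $L$) acts trivially, shows it is bijective, and obtains smoothness from local trivializations. The only differences are cosmetic: the paper identifies $Q/K$ by quotienting in stages ($Q/L\cong P$, then $P/T$), and it writes down an explicit inverse instead of checking injectivity and surjectivity separately.
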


\begin{proof}
First quotient $Q$ by $L$. This removes the $R$-coordinate, since $L$ acts transitively on each of its fibers, and gives $P$. Quotienting by $T$ then gives $P/T$.

For \eqref{eq:pullback-identification}, define
\[
\Phi\bigl([((q,p),v)]\bigr)=([q],[p,v]).
\]
The condition $(q,p)\in P\times_B R$ says that the two components on the right lie over the same point of $B$. Under a change of representative by $(t,\ell)\in T\times L$,
\[
((q,p),v)\longmapsto((qt,p\ell),\ell^{-1}v),
\]
we have $[qt]=[q]$ and $[p\ell,\ell^{-1}v]=[p,v]$. Thus $\Phi$ is well defined. Conversely, choose representatives $q$ and $(p,v)$ of a point $([q],[p,v])$ in the pullback and send it to $[((q,p),v)]$. The pullback condition guarantees $(q,p)\in Q$, and changing either representative changes the resulting triple by exactly the $K$-relation. This defines an inverse to $\Phi$. In local trivializations the maps are smooth and fiberwise linear.

For \eqref{eq:W-identification}, the map is $[((q,p),w)]\mapsto[q,w]$; quotienting first by $L$ proves that it is a smooth bundle isomorphism. Associated bundles preserve direct sums, so these identifications can be used simultaneously for $W\oplus V$.
\end{proof}

\section{Bundles over \texorpdfstring{$\CP^3$}{CP3}}\label{sec:cp3}

\subsection{The twistor fibration and the relevant characteristic classes}
Let
\[
\Sp(1)\longrightarrow P_0=S^7\xrightarrow{p_0}S^4=\HP^1
\]
be the quaternionic Hopf principal bundle, and fix the standard circle $T=\U(1)\subset\Sp(1)$. After identifying $\mathbb H^2$ with $\C^4$ by right multiplication by $i$, the restricted $T$-action on $S^7$ is the usual complex Hopf action. Hence
\[
P_0/T\cong\CP^3.
\]
The quotient map is the twistor fibration
\begin{equation}\label{eq:twistor}
S^2=\Sp(1)/T\longrightarrow\CP^3\xrightarrow{\pi}S^4.
\end{equation}

Let $\lambda=P_0\times_T\C$ be the complex line bundle associated to the weight-one character of $T$, and put $x=c_1(\lambda)$. Depending on the convention for the Hopf action, $\lambda$ is the tautological line bundle or its dual; in either case $x$ is a generator up to sign and
\begin{equation}\label{eq:cp3-cohomology}
H^*(\CP^3;\Z)=\Z[x]/(x^4),
\qquad
p_1(\lambda_{\R})=c_1(\lambda)^2=x^2.
\end{equation}

The cohomology ring immediately verifies the hypotheses of Proposition~\ref{prop:classification}: $H^3(\CP^3;\Ztwo)=0$, $H^4(\CP^3;\Z)$ is free, and squaring sends the generator of $H^2(\CP^3;\Ztwo)$ to the generator of $H^4(\CP^3;\Ztwo)$. We therefore have
\begin{equation}\label{eq:cp3-classification}
\Vect^m_{\R}(\CP^3)\xrightarrow[\cong]{\ p_1\ }H^4(\CP^3;\Z)=\Z\langle x^2\rangle,
\qquad m\ge7.
\end{equation}
Thus, in these ranks, it is enough to construct one nonnegatively curved bundle for each integer multiple of $x^2$.

\begin{lemma}\label{lem:twistor-cohomology}
The pullback
\[
\pi^*:H^4(S^4;\Z)\longrightarrow H^4(\CP^3;\Z)
\]
is an isomorphism. Hence a generator $u\in H^4(S^4;\Z)$ may be chosen so that $\pi^*u=x^2$.
\end{lemma}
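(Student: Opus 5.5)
The plan is to compute $\pi^*$ on $H^4$ through a characteristic class of the quaternionic Hopf bundle, not through a spectral sequence. Let $\gamma=P_0\times_{\Sp(1)}\mathbb H$ be the quaternionic line bundle over $S^4=\HP^1$ associated to $P_0$. Viewed as a complex rank-two bundle, its second Chern class $c_2(\gamma)$ generates $H^4(S^4;\Z)$. This is standard: $P_0=S^7$, so the Euler class of the underlying oriented rank-four bundle is the obstruction to a section, and its sphere bundle $S^7$ has no section. Concretely, the Gysin sequence of $S^3\to S^7\to S^4$ shows that cup product with $e(\gamma)=c_2(\gamma)$ maps $H^0(S^4)\to H^4(S^4)$ isomorphically.

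Next I would compute $\pi^*\gamma$ over $\CP^3=P_0/T$. Pulling back to $P_0/T$ reduces the structure group of $P_0$ to $T$, so $\pi^*\gamma\cong P_0\times_T\mathbb H$, where $T$ acts on $\mathbb H$ through the restriction of the $\Sp(1)$-representation. Under the identification $\mathbb H=\C\oplus\C j$, this restricted representation splits into the weight-one and weight-minus-one characters. This is where the right/left-multiplication and complex-structure conventions must be tracked. In any convention, the complex structure on $\mathbb H$ that commutes with the structure-group action restricts to a $T$-representation that is the sum of a character and its inverse. Hence
\[
\pi^*\gamma\cong\lambda\oplus\bar\lambda
\]
as complex bundles. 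The Whitney formula then gives
\[
c_2(\pi^*\gamma)=c_1(\lambda)c_1(\bar\lambda)=-x^2,
\qquad
\pi^*c_2(\gamma)=-x^2 .
\]

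Since $c_2(\gamma)$ generates $H^4(S^4;\Z)\cong\Z$ and $-x^2$ generates $H^4(\CP^3;\Z)=\Z\langle x^2\rangle$ by \eqref{eq:cp3-cohomology}, $\pi^*$ sends a generator to a generator and is therefore an isomorphism. Setting $u=-c_2(\gamma)$ gives $\pi^*u=x^2$.

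The main obstacle is the second step. One has to verify carefully that the restricted $T$-representation on $\mathbb H$, with a complex structure compatible with how $\gamma$ is regarded as complex, really is $\lambda\oplus\bar\lambda$, with the same $\lambda=P_0\times_T\C$ as in the paper. Only the fact that the two weights are $\pm1$ is needed, and signs do not matter because the conclusion is insensitive to them. As a cross-check, I would also use the Serre spectral sequence of the twistor fibration \eqref{eq:twistor}. Both $S^4$ and $S^2$ have cohomology only in even degrees, so the spectral sequence collapses, and the edge map $H^4(S^4)\to H^4(\CP^3)$ is injective with cokernel zero: $E_\infty^{2,2}=0$ because $H^2(S^4)=0$, and $E_\infty^{0,4}=0$. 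This confirms the isomorphism independently of the conventions.
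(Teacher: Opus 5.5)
Your proof is correct, but your main route differs from the paper's. The paper's proof is the argument you give only as a cross-check. In the integral Serre spectral sequence of the twistor fibration \eqref{eq:twistor}, $E_2^{4,0}=H^4(S^4;\Z)$ is the only nonzero term of total degree four, and no differential can enter or leave it, so the edge map $\pi^*$ is an isomorphism. The sign of $u$ is then fixed by fiat.

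Your primary argument instead evaluates $\pi^*$ on an explicit generator:
\begin{itemize}
\item $c_2(\gamma)=e(\gamma)$ generates $H^4(S^4;\Z)$, by the Gysin sequence of $S^3\to S^7\to S^4$.
\item Reducing the structure group to $T$ gives $\pi^*\gamma\cong P_0\times_T\mathbb H\cong\lambda\oplus\bar\lambda$, because $T\subset\Sp(1)\cong\SU(2)$ acts on $\mathbb H\cong\C^2$ with weights $\pm1$.
\item Naturality then gives $\pi^*c_2(\gamma)=-x^2$, a generator of $H^4(\CP^3;\Z)$.
\end{itemize}

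The paper's route is shorter and convention-free. It uses only the cohomology of fiber and base and the simple connectivity of $S^4$. It also proves the isomorphism without knowing that $x^2$ generates $H^4(\CP^3;\Z)$; the ring structure \eqref{eq:cp3-cohomology} enters only in the ``hence'' clause.

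Your route needs that ring structure from the start. It also needs some bookkeeping of weights and orientations, which is harmless, as you note, since only the generator property matters. In return it identifies the generator concretely as $u=-c_2(\gamma)=-e(\gamma)$. It also computes the pullback of the quaternionic Hopf line bundle through the same reduction $T\subset\Sp(1)$ that underlies the metric construction in Section~\ref{sec:cp3}. Neither route needs more than standard tools; your Gysin step is itself a small spectral-sequence computation.
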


\begin{proof}
Use the integral Serre spectral sequence of \eqref{eq:twistor}. Since the fiber $S^2$ has cohomology only in degrees $0$ and $2$, and the base $S^4$ only in degrees $0$ and $4$, the only nonzero $E_2$-term of total degree four is
\[
E_2^{4,0}=H^4(S^4;\Z)\cong\Z.
\]
No differential can enter or leave this term. The edge homomorphism is therefore an isomorphism. Reversing the sign of $u$ if necessary gives $\pi^*u=x^2$.
\end{proof}

\subsection{Proof of Theorem~\ref{thm:cp3}}
Let $\xi\to\CP^3$ have rank $m\ge7$, and write
\[
p_1(\xi)=kx^2,
\qquad k\in\Z.
\]
We treat the two parities of $k$ separately. This makes transparent why the Hopf line bundle is needed only in the odd case.

\medskip
\noindent\emph{Case 1: $k=2a$ is even.}
By Lemma~\ref{lem:four-dimensional-existence}, choose an oriented real rank-$m$ bundle
\[
\eta_a\longrightarrow S^4,
\qquad
p_1(\eta_a)=2au.
\]
Let $P_a\to S^4$ be its principal $\SO(m)$ frame bundle. Proposition~\ref{prop:lifts} gives commuting lifts of the same $G=\SU(2)$-action on $S^4$ to both $P_0$ and $P_a$. Form the fiber product
\[
Q_a=P_0\times_{S^4}P_a.
\]
By the Grove--Ziller product principle, Lemma~\ref{lem:fiber-product}, $Q_a$ carries a nonnegatively curved metric invariant under the right action of $\Sp(1)\times\SO(m)$.

Now set
\[
K=T\times\SO(m)\subset\Sp(1)\times\SO(m)
\]
and let $T$ act trivially on $\R^m$. Quotienting $Q_a$ first by $\SO(m)$ and then by $T$ gives
\[
Q_a/K\cong P_0/T\cong\CP^3.
\]
The quotient-in-stages Lemma~\ref{lem:pullback} gives a canonical bundle isomorphism
\begin{equation}\label{eq:cp3-even-bundle}
Q_a\times_K\R^m\cong\pi^*\eta_a.
\end{equation}
Consequently
\[
p_1(Q_a\times_K\R^m)
=\pi^*p_1(\eta_a)
=2a\,x^2
=kx^2
=p_1(\xi).
\]
By \eqref{eq:cp3-classification}, the bundle in \eqref{eq:cp3-even-bundle} is isomorphic to $\xi$. Lemma~\ref{lem:quotient-curvature} now gives the required complete nonnegatively curved metric on $\Tot(\xi)$ and the corresponding metric on $S(\xi)$.

\medskip
\noindent\emph{Case 2: $k=2a+1$ is odd.}
Here the pullback of a bundle from $S^4$ can supply only the even part of $p_1$. The missing copy of $x^2$ is supplied by $\lambda_{\R}$. Since $m\ge7$, we have $m-2\ge5$. Choose an oriented real rank-$(m-2)$ bundle
\[
\eta_a\longrightarrow S^4,
\qquad
p_1(\eta_a)=2au=(k-1)u,
\]
and let $P_a\to S^4$ be its frame bundle. Again put
\[
Q_a=P_0\times_{S^4}P_a,
\qquad
K=T\times\SO(m-2).
\]
The same lifting argument gives an invariant metric with $\secc\ge0$ on $Q_a$.

Let $K$ act orthogonally on
\[
V=\C_{\R}\oplus\R^{m-2}
\]
by
\[
(t,A)\cdot(w,v)=(tw,Av).
\]
The $\SO(m-2)$ factor acts trivially on the complex line, while $T$ acts trivially on $\R^{m-2}$. Therefore Lemma~\ref{lem:pullback}, applied to the two summands, gives
\begin{equation}\label{eq:cp3-odd-bundle}
Q_a\times_KV
\cong
\lambda_{\R}\oplus\pi^*\eta_a.
\end{equation}
Its first Pontryagin class is
\[
p_1(\lambda_{\R})+\pi^*p_1(\eta_a)
=x^2+(k-1)x^2
=kx^2
=p_1(\xi).
\]
Hence \eqref{eq:cp3-classification} identifies \eqref{eq:cp3-odd-bundle} with $\xi$. Applying Lemma~\ref{lem:quotient-curvature} to $V$, and separately to its unit sphere $S(V)$ with the standard round metric, completes the proof of Theorem~\ref{thm:cp3}.

\section{Bundles over the blow-up}\label{sec:blowup}

Set
$M=\CP^3\#\CP^3.$
The proof has four topological ingredients, which we keep separate: the topology for $M$, a principal $T^2$-bundle that realizes all line bundles, an arithmetic decomposition of $p_1$, and a liftable residual bundle over $\CP^2$. We then combine these data in one associated-bundle construction.

\subsection{The $S^2$-bundle structure and cohomology of $M$}\label{subsec:projective-model}
We use the description of Escher--Ziller \cite[Sections~4 and~5]{EZ14}.
They consider a family of $S^2$-bundles
\[
S^2\longrightarrow N_t
\xrightarrow{\;\pi\;}
\CP^2 .
\]
For $t=0$, they identify
\[
N_0\cong \CP^3\#\overline{\CP^3}
\]
(see \cite[Proposition~5.5]{EZ14}).  Since complex conjugation reverses orientation on $\CP^3$, the underlying unoriented smooth manifold of $N_0$ is diffeomorphic to $\CP^3\#\CP^3$.  We therefore identify $M$ with $N_0$ and use the resulting $S^2$-bundle
\[
S^2\longrightarrow M\xrightarrow{\;\pi\;}\CP^2.
\]

Let
$x\in H^2(M;\Z)$
denote the pullback of the standard generator of
$H^2(\CP^2;\Z)$, and let
$y\in H^2(M;\Z)$
be the second generator. The
cohomology ring of $M$ satisfies
\begin{equation}\label{eq:ring}
H^*(M;\Z)
\cong
\Z[x,y]/(x^3,\;y^2+xy),
\qquad |x|=|y|=2.
\end{equation}
In particular,
\[
H^2(M;\Z)=\Z\{x,y\},
\qquad
H^4(M;\Z)=\Z\{x^2,xy\}.
\]

Reducing \eqref{eq:ring} modulo two gives
\[
y^2=xy.
\]
Therefore
\[
(\alpha x+\beta y)^2
=
\alpha x^2+\beta xy,
\qquad
\alpha,\beta\in\Z/2,
\]
and hence the squaring map
\[
H^2(M;\Z/2)\longrightarrow H^4(M;\Z/2),
\qquad z\longmapsto z^2,
\]
is an isomorphism. Proposition~\ref{prop:classification} now gives
\begin{corollary}\label{cor:M-classification}
For every $m\ge7$, the first Pontryagin class induces a bijection
\[
p_1:\Vect^m_{\R}(M)\xrightarrow{\ \cong\ }H^4(M;\Z)
=\Z\{x^2,x y\}.
\]
Thus a rank-$m$ bundle is determined by the two integers $A,B$ in
\begin{equation}\label{eq:p1AB}
p_1(\xi)=A x^2+B\,xy.
\end{equation}
\end{corollary}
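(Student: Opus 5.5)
The corollary follows by checking the hypotheses of Proposition~\ref{prop:classification} for $X=M$ and then reading off $H^4(M;\Z)$ from \eqref{eq:ring}. We first fix a CW structure of dimension six on $M$, which is possible since $M$ is a closed smooth six-manifold. Simple connectivity comes from the long exact homotopy sequence of $S^2\to M\to\CP^2$: both fiber and base are simply connected, so $\pi_1(M)=0$. (Alternatively, van Kampen applied to the connected sum $\CP^3\#\CP^3$ gives the same conclusion.)

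Next we verify the cohomological hypotheses from \eqref{eq:ring}. The ring has generators only in even degrees, so $H^{\mathrm{odd}}(M;\Z)=0$. The ring is free as an abelian group in every degree: it has basis $1$; $x,y$; $x^2,xy$; $x^2y$. This also follows from the Serre spectral sequence of the $S^2$-bundle over $\CP^2$, which is concentrated in even bidegrees and therefore degenerates with free $E_2$ page.

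From this, $H^4(M;\Z)=\Z\{x^2,xy\}$ is torsion-free. By the universal coefficient theorem, $H^3(M;\Ztwo)\cong \mathrm{Hom}(H_3,\Ztwo)\oplus\mathrm{Ext}(H_2,\Ztwo)=0$, since all integral homology is free and concentrated in even degrees. The same freeness shows that mod-two cohomology is the mod-two reduction of \eqref{eq:ring}. Here $y^2=-xy\equiv xy$, so squaring takes $x\mapsto x^2$ and $y\mapsto xy$, while cross terms vanish mod two. This gives exactly the isomorphism \eqref{eq:squaring}, as computed just before the corollary.

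Proposition~\ref{prop:classification} then yields the bijection $p_1:\Vect^m_{\R}(M)\to H^4(M;\Z)$ for $m\ge7$. Writing $p_1$ in the basis $x^2,xy$ gives the pair $(A,B)$ in \eqref{eq:p1AB}. The only step that needs care is the torsion-freeness of the integral cohomology behind \eqref{eq:ring}. This can be quoted from the Leray--Hirsch theorem, using that $y$ restricts to a generator of $H^2(S^2)$; everything else is formal.
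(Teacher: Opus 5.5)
Your proposal is correct and follows the paper's route: read off the hypotheses of Proposition~\ref{prop:classification} for $M$ from the ring \eqref{eq:ring}, where the paper explicitly checks only the mod-two squaring isomorphism, and then apply the proposition. Your additional checks of simple connectivity, of torsion-freeness of $H^4(M;\Z)$, and of $H^3(M;\Ztwo)=0$ via universal coefficients make explicit what the paper leaves implicit in the ring presentation.
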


\subsection{A principal torus bundle over
\texorpdfstring{$M$}{M}}\label{subsec:torus}

We use the principal torus bundle constructed by Escher--Ziller.  In their notation,
\[
T^2\longrightarrow P_t\longrightarrow N_t=P_t/T^2
\]
is a principal $T^2$-bundle; see \cite[Proposition~5.1 and the remark following it]{EZ14}.  For $t=0$ they identify
\[
P_0\cong S^5\times S^3,
\qquad
N_0\cong\CP^3\#\overline{\CP^3};
\]
see \cite[Proposition~5.5]{EZ14}.  Using the smooth identification of the underlying unoriented manifold $N_0$ with $M=\CP^3\#\CP^3$ fixed above, we obtain a principal bundle
\begin{equation}\label{eq:torus-base}
T^2\longrightarrow Z:=S^5\times S^3\xrightarrow{\;p\;}M.
\end{equation}
The composite $Z\xrightarrow{p}M\xrightarrow{\pi}\CP^2$ is, under the identification of \cite[Proposition~5.5]{EZ14}, the Hopf projection on the $S^5$-factor.  We shall use only these structural facts about the torus bundle; no choice of coordinates on $T^2$ will be needed.

\subsection{Characters and line bundles}\label{subsec:characters}
Every character $\chi:T^2\to S^1$ determines a complex line bundle
\[
L_\chi=Z\times_{T^2}\C_\chi\longrightarrow M.
\]
The next elementary observation allows us to choose characters directly from their first Chern classes.

\begin{lemma}\label{lem:characters}
The map
\begin{equation}\label{eq:character-c1-isomorphism}
\operatorname{Hom}(T^2,S^1)\longrightarrow H^2(M;\Z),
\qquad
\chi\longmapsto c_1(L_\chi),
\end{equation}
is an isomorphism.  Hence, for every $r,\ell\in\Z$, there is a unique character $\chi_{r,\ell}$ such that the associated line bundle $L_{r,\ell}:=L_{\chi_{r,\ell}}$ satisfies
\begin{equation}\label{eq:line-c1}
c_1(L_{r,\ell})=r x+\ell y.
\end{equation}
In particular, every complex line bundle over $M$ is obtained from a unique character of $T^2$.
\end{lemma}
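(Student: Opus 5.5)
The plan is to identify the map \eqref{eq:character-c1-isomorphism} with a transgression homomorphism and then read off bijectivity from the homotopy exact sequence of the principal bundle \eqref{eq:torus-base}. For a principal $T^2$-bundle $Z\to M$ there is a classifying map $f:M\to BT^2$. The line bundle $L_\chi$ is the pullback of the universal line bundle $ET^2\times_{T^2}\C_\chi$. Its first Chern class is therefore $f^*(B\chi)$, where we use the standard identification
\[
\operatorname{Hom}(T^2,S^1)\cong H^2(BT^2;\Z)=\operatorname{Hom}(\pi_2 BT^2,\Z)\cong\operatorname{Hom}(\pi_1T^2,\Z).
\]
So the map in question is the composite
\[
\operatorname{Hom}(T^2,S^1)\cong H^2(BT^2;\Z)\xrightarrow{f^*}H^2(M;\Z).
\]
This composite is additive, because $L_{\chi\chi'}\cong L_\chi\otimes L_{\chi'}$ and $c_1$ is additive on tensor products. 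It is therefore a homomorphism between free abelian groups of rank two, and it suffices to show that $f^*$ is an isomorphism in degree two.

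For this, use the long exact homotopy sequence of $T^2\to Z\to M$ with $Z=S^5\times S^3$. Since $\pi_1(Z)=\pi_2(Z)=0$, the boundary map $\partial:\pi_2(M)\to\pi_1(T^2)=\Z^2$ is an isomorphism, and $\pi_1(M)=0$. The classifying map induces on $\pi_2$ exactly this boundary map, after the identification $\pi_2(BT^2)\cong\pi_1(T^2)$, by naturality of the homotopy sequence with respect to the universal bundle. Hence $f_*:\pi_2(M)\to\pi_2(BT^2)$ is an isomorphism. Since $M$ is simply connected, Hurewicz gives $H_2(M)\cong\pi_2(M)$ naturally, and $H_1(M)=0$. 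Universal coefficients then give $H^2(M;\Z)=\operatorname{Hom}(H_2(M),\Z)$, so $f^*$ is an isomorphism on $H^2$. This already yields the lemma without any coordinates on $T^2$.

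The remaining statements follow formally. Existence and uniqueness of $\chi_{r,\ell}$ come from bijectivity applied to the class $rx+\ell y$. Every complex line bundle over $M$ is determined up to isomorphism by $c_1$, since $\C P^\infty=K(\Z,2)$. Hence each line bundle is $L_\chi$ for a unique $\chi$.

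The main point needing care is the identification of $c_1(L_\chi)$ with $f^*$ of the class corresponding to $\chi$, together with the sign conventions relating $\partial$ to $f_*$. Signs are harmless here: a sign change composes with an automorphism and cannot affect bijectivity. I would check the identification directly on $BS^1=\C P^\infty$ for the two coordinate characters, and then use additivity.
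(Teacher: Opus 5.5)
Your proof is correct, and it reaches the paper's conclusion by a somewhat different route. The paper works in the cohomology Serre spectral sequence of $T^2\to Z\to M$. It uses $H^1(Z)=H^2(Z)=0$ to see that the transgression $d_2\colon H^1(T^2;\Z)\to H^2(M;\Z)$ is an isomorphism. It then asserts, up to sign and with a pointer to Escher--Ziller, the standard fact that $c_1(L_\chi)$ is the transgression of $\chi$.

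You instead factor the map through the classifying map $f\colon M\to BT^2$. This turns the identification of $c_1(L_\chi)$ into naturality of pullbacks plus a check on $BT^2=\CP^\infty\times\CP^\infty$. You then get bijectivity of $f^*$ on $H^2$ from four inputs: the homotopy sequence with $\pi_1Z=\pi_2Z=0$, naturality with respect to $ET^2\to BT^2$, Hurewicz, and universal coefficients.

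The two arguments are dual forms of the same computation: by naturality of the spectral sequence under the bundle map $Z\to ET^2$, the transgression for $Z$ is $f^*$ composed with the universal transgression $H^1(T^2;\Z)\cong H^2(BT^2;\Z)$. Your version proves the step that the paper only asserts, namely $c_1=$ transgression. It also gives $\pi_1(M)=0$ along the way, and it explicitly justifies the final clause about every line bundle through $\CP^\infty=K(\Z,2)$, which the paper leaves implicit. The paper's version is shorter and stays entirely within cohomology.
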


\begin{proof}
The character lattice $\operatorname{Hom}(T^2,S^1)$ is naturally identified with $H^1(T^2;\Z)$.  Since $Z=S^5\times S^3$ has
\[
H^1(Z;\Z)=H^2(Z;\Z)=0,
\]
the low-degree part of the Serre spectral sequence for the principal bundle \eqref{eq:torus-base} gives an isomorphism
\[
d_2:H^1(T^2;\Z)\xrightarrow{\ \cong\ }H^2(M;\Z).
\]
The first Chern class of the line bundle associated to a character is the corresponding transgression (up to the harmless sign determined by the associated-bundle convention).  Thus \eqref{eq:character-c1-isomorphism} is an isomorphism; compare the transgression calculation in \cite[Proposition~5.1 and the remark following it]{EZ14}.  Since $x,y$ form an integral basis of $H^2(M;\Z)$, the classes $rx+\ell y$ determine unique characters $\chi_{r,\ell}$.
\end{proof}

For later use, the realification of $L_{r,\ell}$ satisfies, by \eqref{eq:ring},
\begin{equation}\label{eq:line-p1}
\begin{aligned}
p_1((L_{r,\ell})_{\R})
&=c_1(L_{r,\ell})^2\\
&=(rx+\ell y)^2\\
&=r^2x^2+(2r\ell-\ell^2)xy.
\end{aligned}
\end{equation}
The mixed coefficient $2r\ell-\ell^2$ is the reason that line bundles can be used to prescribe the $xy$-component of $p_1$.

\subsection{Choosing the line bundles}\label{subsec:decomposition}
Let $\xi\to M$ have rank $m\ge8$, and write $p_1(\xi)$ as in \eqref{eq:p1AB}. We first choose one or two line bundles so that their realifications account for the entire $B\,xy$ term. What remains is a multiple $C x^2$, which can be pulled back from $\CP^2$. If two line bundles are needed and $m=8$, the residual bundle has rank four; we must then arrange that $C$ lies in the part of the rank-four Grove--Ziller lifting range that we can realize.

\begin{lemma}\label{lem:arithmetic}
For every $A,B\in\Z$ there exist $d\in\{1,2\}$ and integers $(r_i,\ell_i)$, $1\le i\le d$, such that
\begin{equation}\label{eq:arithmetic}
A x^2+B\,xy
=\sum_{i=1}^{d}(r_i x+\ell_i y)^2+C x^2
\end{equation}
for some $C\in\Z$. The choices can be made so that
\begin{equation}\label{eq:C-condition}
d=2\quad\Longrightarrow\quad C\not\equiv2\pmod4.
\end{equation}
\end{lemma}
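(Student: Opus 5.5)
The plan is to work entirely with the mixed coefficient in \eqref{eq:line-p1}. By that formula, \eqref{eq:arithmetic} amounts to two conditions:
\[
\sum_{i=1}^d(2r_i\ell_i-\ell_i^2)=B,
\qquad
C=A-\sum_{i=1}^d r_i^2 .
\]
The $x^2$-coefficient $A$ imposes no constraint, since $C$ absorbs it. So the only tasks are to hit $B$ with the mixed terms and, when $d=2$, to control $\sum r_i^2$ modulo $4$. The key observation is the residue of $2r\ell-\ell^2$ modulo $4$:
\begin{itemize}
\item for odd $\ell$ it is odd;
\item for even $\ell=2\ell'$ it equals $4(r\ell'-\ell'^2)\equiv0\pmod4$.
\end{itemize}
This suggests splitting according to $B$ modulo $4$.

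\emph{Case $B$ odd.} Take $d=1$, $\ell_1=1$ and $r_1=(B+1)/2$. Then $2r_1-1=B$, and $C=A-r_1^2$ is unconstrained since $d=1$.

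\emph{Case $B\equiv0\pmod4$.} Take $d=1$, $\ell_1=2$ and $r_1=B/4+1$. Then $4r_1-4=B$.

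\emph{Case $B\equiv2\pmod4$.} This is the only case where one line bundle cannot work. A single summand gives either an odd mixed coefficient or one divisible by $4$, never one $\equiv2\pmod4$. So take $d=2$ with $\ell_1=\ell_2=1$. The condition becomes $(2r_1-1)+(2r_2-1)=B$, that is,
\[
r_1+r_2=s:=B/2+1 .
\]
Since $B/2$ is odd, $s$ is even, so $r_1$ and $r_2$ have the same parity. We then have $r_1^2+r_2^2\equiv0\pmod4$ if both are even, and $\equiv2\pmod4$ if both are odd. Both options are available with the sum fixed: $(r_1,r_2)=(0,s)$ gives the even case and $(1,s-1)$ gives the odd case. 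Hence $\sum r_i^2$ can take either residue $0$ or $2$ modulo $4$. Choose the one with $A-\sum r_i^2\not\equiv2\pmod4$: pick residue $0$ unless $A\equiv2\pmod4$, and residue $2$ in that case. This yields \eqref{eq:C-condition}.

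The only real obstacle is this last case. The point to check is that the freedom in splitting $s=r_1+r_2$ really shifts $r_1^2+r_2^2$ between the residues $0$ and $2$ modulo $4$. The parity argument above settles this. Everything else is direct verification using \eqref{eq:line-p1}.
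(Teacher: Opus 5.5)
Your proposal is correct and matches the paper's proof: the paper uses the same weights $((B+1)/2,1)$ and $(B/4+1,2)$ in the one-line-bundle cases, and for $B\equiv2\pmod4$ the same pair $(\epsilon,1),(N-\epsilon,1)$ with $N=(B+2)/2$ even and $\epsilon=1$ exactly when $A\equiv2\pmod4$. Your extra remark, that a single summand can never give a mixed coefficient $\equiv2\pmod4$, is correct and explains why two line bundles are needed there; the paper states this without justification.
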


\begin{proof}
We choose the weights according to the residue class of $B$.

If $B$ is odd, one line bundle is enough: take
\[
(r_1,\ell_1)=\left(\frac{B+1}{2},1\right).
\]
Then $2r_1\ell_1-\ell_1^2=B$.

If $B\equiv0\pmod4$, again one line bundle is enough: take
\[
(r_1,\ell_1)=\left(\frac B4+1,2\right).
\]
Then $2r_1\ell_1-\ell_1^2=B$.

Finally suppose that $B\equiv2\pmod4$. This is the only case in
which we use two line bundles. Put
\[
N=\frac{B+2}{2},
\]
so that $N$ is even. Choose
\[
\epsilon=
\begin{cases}
1,& A\equiv2\pmod4,\\
0,& A\not\equiv2\pmod4,
\end{cases}
\]
and set
\[
(r_1,\ell_1)=(\epsilon,1),
\qquad
(r_2,\ell_2)=(N-\epsilon,1).
\]
Then
\[
(2r_1\ell_1-\ell_1^2)
+
(2r_2\ell_2-\ell_2^2)
=
(2\epsilon-1)+(2N-2\epsilon-1)
=
B,
\]
so the two line bundles account for the entire $xy$-component.

It remains to check the residue of the coefficient
\[
C=A-r_1^2-r_2^2.
\]
Since $N$ is even,
\[
r_1^2+r_2^2
\equiv
\begin{cases}
0\pmod4,&\epsilon=0,\\
2\pmod4,&\epsilon=1.
\end{cases}
\]
Hence:
\[
\begin{array}{c|c}
A\pmod4 & C\pmod4\\
\hline
0&0\\
1&1\\
2&0\\
3&3
\end{array}
\]
where in the case $A\equiv2\pmod4$ we used $\epsilon=1$.
Therefore
$C\not\equiv2\pmod4.$
\end{proof}

\subsection{The residual bundle over \texorpdfstring{$\CP^2$}{CP2}}
When the residual rank is at least five, Proposition~\ref{prop:lifts} gives a commuting lift for its principal frame bundle over $\CP^2$. Only the rank-four case needs an additional check.

\begin{lemma}\label{lem:rank4-lift}
For every integer $C\not\equiv2\pmod4$, there exists an oriented real rank-four bundle
\[
\eta_C\longrightarrow\CP^2,
\qquad p_1(\eta_C)=Ch^2,
\]
whose principal $\SO(4)$ frame bundle admits a commuting lift of the action \eqref{eq:CP2-action}.
\end{lemma}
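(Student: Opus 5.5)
The plan is not to build a new metric. We pick the bundle among all oriented rank-four bundles over $\CP^2$ with the given $p_1$, and use the precise rank-four part of the Grove--Ziller lifting theorem over $\CP^2$ (the statement in \cite{GZ11} that precedes and refines \cite[Corollary~4.11]{GZ11}). The point is that the hypothesis $C\not\equiv2\pmod4$ leaves enough freedom in the remaining characteristic classes to land in the liftable range.

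First we parametrize the candidates. By Dold--Whitney, oriented rank-four bundles over the four-complex $\CP^2$ are classified by $(w_2,p_1,e)$. Write $w_2=\omega\rho_2 h$ with $\omega\in\{0,1\}$ and $e=\epsilon h^2$. The realizable triples are those satisfying
\[
p_1\equiv\PSq(w_2)+2e\pmod4,
\qquad
w_2^2\equiv\rho_2 e,
\]
together with the standard relation between $p_1$ and $e$ coming from the two $\SU(2)$ factors of $\SO(4)=(\Sp(1)\times\Sp(1))/\{\pm1\}$. The two associated $\SO(3)$-bundles $\Lambda^\pm$ both have $w_2(\Lambda^\pm)=w_2$, and their Pontryagin classes are $p_1(\Lambda^\pm)=p_1\pm2e$. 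For $p_1=Ch^2$ the relations say the following:
\begin{itemize}
\item If $C$ is odd, then $w_2\ne0$ and $\epsilon$ is odd.
\item If $C\equiv0\pmod4$, then $w_2=0$ and $\epsilon$ is even.
\item If $C\equiv2\pmod4$, then $w_2=0$ and $\epsilon$ is odd.
\end{itemize}
In the first two cases $\epsilon$ is otherwise free, subject only to integrality of $p_1(\Lambda^\pm)$.

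Second, we recall the lifting criterion. A commuting lift to an $\SO(4)$-frame bundle exists precisely when it exists for the pair $\Lambda^\pm$, after passing to the covering group as in the standing convention of \cite[Lemma~1.6]{GZ11}. Grove--Ziller's description gives the following.
\begin{itemize}
\item Every $\SO(3)$-bundle with $w_2\neq0$ over $\CP^2$ admits a lift.
\item For $w_2=0$, i.e.\ $\SU(2)$-bundles with $c_2=k h^2$, lifts exist only for a specific arithmetic family of $k$.
\end{itemize}
We will also use the direct lifts from Proposition~\ref{prop:lifts}(2). These come from the linear $\SU(2)$-action on $S^5$, applied to $\lambda^a\oplus\lambda^b$ and to extensions, which give the torus-reduced bundles with $p_1=(a^2+b^2)h^2$.

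Third, we check each residue class of $C$.
\begin{itemize}
\item For $C$ odd, both $\Lambda^\pm$ have $w_2\ne0$, so any choice of odd $\epsilon$ works. For instance, take $\epsilon=1$ and realize the bundle as $(E_C)_{\R}$ from Lemma~\ref{lem:four-dimensional-existence}.
\item For $C=4k$, choose $\epsilon$ so that one factor is trivial: take $e=\pm\tfrac{C}{2}h^2$. Then $p_1(\Lambda^+)=0$ or $p_1(\Lambda^-)=0$, and the frame bundle reduces to a single $\Sp(1)$-bundle with $c_2=-k$, i.e.\ essentially a quaternionic line bundle. We then check against the Grove--Ziller list that this family lies in the liftable range. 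If it does not, we would instead try $\epsilon$ making $p_1(\Lambda^+)=p_1(\Lambda^-)$. Alternatively, we would try writing $C=a^2+b^2$ with $a,b$ even plus a liftable correction, using the torus construction.
\end{itemize}
The condition $C\equiv2\pmod4$ forces $w_2=0$ with $\epsilon$ odd, so both $\SU(2)$ factors have odd $c_2$. This is presumably exactly the exceptional configuration, which is why it is excluded.

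The main obstacle is the $C\equiv0\pmod4$ case. There one must match the Euler-class freedom precisely against Grove--Ziller's rank-four exception list, and verify that some admissible $\epsilon$ avoids it for every $k$. This requires quoting the exceptions in a form compatible with our $\SU(2)$ acting group and with the sign conventions for $e$ and $p_1$.
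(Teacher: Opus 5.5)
\textbf{There is a genuine gap: you never carry out the decisive step.} Your bundle choice is the paper's. For odd $C$ you take $(E_C)_{\R}$, which is nonspin. For $C\equiv0\pmod4$ you take $e=\pm\frac C2h^2$, so that one of $\Lambda^\pm$ is trivial; this is exactly the paper's $c_1(E_C)=0$, $c_2(E_C)=-\frac C2h^2$. But in the case $C\equiv0\pmod4$ you never state or apply the Grove--Ziller criterion. You only say you would check the list, and you offer fallbacks in case the check fails.

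The missing input is \cite[Corollary~4.11(b)]{GZ11}, in the following form. Take a rank-four bundle whose structure group does not reduce to a torus, and write $p_1=(2\kappa+2\lambda)h^2$ and $e=(\kappa-\lambda)h^2$; equivalently $p_1(\Lambda^+)=4\kappa h^2$ and $p_1(\Lambda^-)=4\lambda h^2$. Then exceptions can occur only when $\kappa,\lambda$ are not both even. With $C=4k$ and $e=-2kh^2$ one gets $(\kappa,\lambda)=(0,2k)$, both even, so the lift exists. The torus-reducible case is covered separately by the linear lifts in Proposition~\ref{prop:lifts}.

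Your computation of the surviving factor gets exactly this number wrong. You have $p_1(\Lambda^-)=C-2e=8kh^2$, so the nontrivial $\Sp(1)$-factor has $c_2=-2k$ (this is $c_2(E_C)=-\frac C2h^2$), not $-k$. The evenness of this $c_2$ is precisely what keeps the bundle out of the exceptional family. With your value $-k$, the check would appear to fail for odd $k$, which is presumably why you hedge.

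\textbf{Your parametrization also rests on a false relation.} The relation $w_2^2\equiv\rho_2e$ is not one of the Dold--Whitney relations. The correct constraints for oriented rank-four bundles over a four-complex are $\PSq(w_2)=\rho_4(p_1+2e)$ and $w_4=\rho_2e$. In general $w_2^2\neq w_4$; for example, take the realified Hopf line bundle plus $\eps^2$ over $\CP^2$. The consequences are:
\begin{itemize}
\item For odd $C$, writing $e=\epsilon h^2$, one has $\epsilon\equiv\frac{C-1}{2}\pmod2$. So $\epsilon=1$ is impossible when $C\equiv1\pmod4$.
\item The bundle $(E_C)_{\R}$ has $e=\frac{1-C}{2}h^2$, not $h^2$.
\item For $C\equiv2\pmod4$ it is not true that both factors have odd $c_2$. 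Since $\kappa+\lambda=C/2$ is odd, exactly one of them does.
\end{itemize}
Your odd case still survives, because it only uses $w_2(\Lambda^\pm)=w_2\neq0$. The paper argues this case directly at the $\SO(4)$ level: the exceptional rank-four family is spin, and $(E_C)_{\R}$ is not. But the stated parity claims should be corrected.
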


\begin{proof}
Let $\delta\in\{0,1\}$ be the parity of $C$, and choose the complex rank-two bundle $E_C$ from \eqref{eq:EC}:
\[
c_1(E_C)=\delta h,
\qquad
c_2(E_C)=\frac{\delta-C}{2}h^2.
\]
Put $\eta_C=(E_C)_{\R}$ with its complex orientation. Formula~\eqref{eq:realification} gives $p_1(\eta_C)=Ch^2$.

Suppose first that $C$ is odd. Then $\delta=1$, so
\[
w_2(\eta_C)=\rho_2(c_1(E_C))=\rho_2(h)\ne0.
\]
If the structure group of the frame bundle reduces to a torus, the required lift is already available by the torus-reduction discussion in Proposition~\ref{prop:lifts}. If it does not reduce to a torus, Grove--Ziller \cite[Corollary~4.11(b)]{GZ11} applies; its exceptional rank-four family is spin, so our nonspin bundle is not exceptional. Thus a commuting lift exists in either case.

Now suppose $C\equiv0\pmod4$. Since $C$ is even, the choice of $\delta$ gives $\delta=0$. Hence
\[
c_1(E_C)=0,
\qquad
c_2(E_C)=-\frac C2h^2,
\]
and therefore
\begin{equation}\label{eq:rank4-classes}
p_1(\eta_C)=Ch^2,
\qquad
e(\eta_C)=c_2(E_C)=-\frac C2h^2.
\end{equation}
For a non-torus-reducible rank-four bundle, \cite[Corollary~4.11(b)]{GZ11} gives the  characteristic classes in the form
\[
p_1=(2\kappa+2\lambda)h^2,
\qquad
e=(\kappa-\lambda)h^2,
\]
and the only possible exceptions occur when $\kappa$ and $\lambda$ are not both even. Comparing with \eqref{eq:rank4-classes} gives
\[
\kappa=0,
\qquad
\lambda=\frac C2.
\]
Because $C\equiv0\pmod4$, both integers are even. Hence the bundle lies outside the exceptional family and the commuting lift exists. If its structure group reduces to a torus, the lift again exists by the torus-reduction case. This proves the lemma.
\end{proof}

\begin{proposition}\label{prop:decomposition}
Let $\xi\to M$ have real rank $m\ge8$. Then there exist $d\in\{1,2\}$, complex line bundles $L_1,\ldots,L_d\to M$, and an oriented real rank-$(m-2d)$ bundle $\eta\to\CP^2$ such that
\begin{equation}\label{eq:decomposition}
\xi\cong
\bigoplus_{i=1}^{d}(L_i)_{\R}\oplus\pi^*\eta,
\end{equation}
and the principal frame bundle of $\eta$ admits a commuting $G$-lift.
\end{proposition}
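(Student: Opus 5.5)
The plan is to assemble the earlier pieces: Corollary~\ref{cor:M-classification} reduces the problem to matching first Pontryagin classes, Lemma~\ref{lem:arithmetic} supplies the line bundles, and Proposition~\ref{prop:lifts} together with Lemma~\ref{lem:rank4-lift} supplies the residual bundle over $\CP^2$.

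First, write $p_1(\xi)=Ax^2+B\,xy$ as in \eqref{eq:p1AB}. Apply Lemma~\ref{lem:arithmetic} to $(A,B)$. This gives $d\in\{1,2\}$, weights $(r_i,\ell_i)$, and an integer $C$ satisfying \eqref{eq:arithmetic}, with $C\not\equiv2\pmod4$ whenever $d=2$. Set $L_i=L_{r_i,\ell_i}$ using Lemma~\ref{lem:characters}, so that $c_1(L_i)=r_ix+\ell_iy$. By \eqref{eq:line-p1},
\[
p_1\bigl((L_i)_{\R}\bigr)=(r_ix+\ell_iy)^2 .
\]
One could use any line bundle with this Chern class. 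Choosing the characters now is convenient for the later associated-bundle construction.

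Next, choose the residual bundle $\eta\to\CP^2$ of rank $m-2d\ge4$ with $p_1(\eta)=Ch^2$, where $x=\pi^*h$. There are two cases.
\begin{enumerate}
\item If $m-2d\ge5$, take $\eta=\eta_C$ from Lemma~\ref{lem:four-dimensional-existence}(2). Its frame bundle has a commuting lift by Proposition~\ref{prop:lifts}(2).
\item If $m-2d=4$, then $m=8$ and $d=2$. Lemma~\ref{lem:arithmetic} then guarantees $C\not\equiv2\pmod4$, so Lemma~\ref{lem:rank4-lift} provides a rank-four $\eta_C$ with $p_1=Ch^2$ and a liftable frame bundle.
\end{enumerate}
Rank $m\ge8$ with $d\le2$ never forces residual rank below four, so these cases are exhaustive.

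Finally, compute $p_1$ of the candidate model. By the Whitney sum formula and naturality,
\[
p_1\Bigl(\bigoplus_i(L_i)_{\R}\oplus\pi^*\eta\Bigr)=\sum_i(r_ix+\ell_iy)^2+Cx^2=Ax^2+B\,xy=p_1(\xi).
\]
The Whitney sum formula for $p_1$ holds here without correction, because $H^*(M;\Z)$ is torsion-free: the 2-torsion error term vanishes. Since $m\ge8\ge7$, Corollary~\ref{cor:M-classification} then yields the isomorphism \eqref{eq:decomposition}.

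The only delicate point is the rank-four residual case, where not every $\SO(4)$-bundle over $\CP^2$ lifts. That difficulty has already been isolated into the congruence condition \eqref{eq:C-condition} and Lemma~\ref{lem:rank4-lift}. The remaining care is bookkeeping: check that the residual rank is exactly four only when $d=2$, and that the orientation and sign conventions for $x=\pi^*h$ agree, so that $\pi^*(Ch^2)=Cx^2$.
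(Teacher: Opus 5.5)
Your proposal is correct and follows the paper's proof: you take $d$, the weights and $C$ from Lemma~\ref{lem:arithmetic}, build the residual bundle over $\CP^2$ either from Lemma~\ref{lem:four-dimensional-existence} with Proposition~\ref{prop:lifts} or from Lemma~\ref{lem:rank4-lift}, then match $p_1$ and invoke Corollary~\ref{cor:M-classification}. The only cosmetic difference is the case split: you split by residual rank, using Proposition~\ref{prop:lifts} directly when $d=2$ and $m\ge9$, whereas the paper splits by $d$ and, when $d=2$, always stabilizes the liftable rank-four bundle of Lemma~\ref{lem:rank4-lift} by trivial lines; both are valid.
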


\begin{proof}
Choose the integers $d,(r_i,\ell_i)$ and $C$ from Lemma~\ref{lem:arithmetic}, and put $L_i=L_{r_i,\ell_i}$. Let
\[
r=m-2d.
\]
If $d=1$, then $r\ge6$. By Lemma~\ref{lem:four-dimensional-existence}, choose an oriented rank-$r$ bundle $\eta\to\CP^2$ with $p_1(\eta)=Ch^2$. Since $r\ge5$, Proposition~\ref{prop:lifts} gives a commuting lift to its frame bundle.

If $d=2$, then $r\ge4$ and Lemma~\ref{lem:arithmetic} ensures $C\not\equiv2\pmod4$. Choose the rank-four bundle from Lemma~\ref{lem:rank4-lift}; if $r>4$, add $r-4$ trivial real lines. Extension of the structure group preserves the commuting lift.

In either case, \eqref{eq:line-p1} and \eqref{eq:arithmetic} show that the right-hand side of \eqref{eq:decomposition} has first Pontryagin class
\[
A x^2+B\,xy=p_1(\xi).
\]
Its rank is $m$. Corollary~\ref{cor:M-classification} therefore identifies it with $\xi$ as an actual rank-$m$ bundle.
\end{proof}

\subsection{Proof of Theorem~\ref{thm:blowup}}
Choose a decomposition \eqref{eq:decomposition}. Write $r=\rank\eta$, and let
\[
\SO(r)\longrightarrow P_\eta\xrightarrow{p_\eta}\CP^2
\]
be its oriented frame bundle. We now realize all summands of \eqref{eq:decomposition} simultaneously from one compact nonnegatively curved principal bundle.

\medskip
\noindent\emph{Step 1: a nonnegatively curved bundle over $\CP^2$.}
Let $q_0:S^5\to\CP^2$ be the complex Hopf bundle and form
\begin{equation}\label{eq:Qblowup}
Q=S^5\times_{\CP^2}P_\eta.
\end{equation}
The same $G=\SU(2)$-action on $\CP^2$ admits commuting lifts to the Hopf bundle and to $P_\eta$. Lemma~\ref{lem:fiber-product} therefore gives $Q$ a metric with $\secc\ge0$ invariant under its right structure group $S^1\times\SO(r)$.

\medskip
\noindent\emph{Step 2: replace the base $\CP^2$ by $M$.}
Let $Z=S^5\times S^3\to M$ be the principal $T^2$-bundle from \eqref{eq:torus-base}.  Using the composite $Z\to M\xrightarrow{\pi}\CP^2$, form the pullback
\begin{equation}\label{eq:Qhat-blowup}
\widehat Q=Z\times_{\CP^2}P_\eta.
\end{equation}
Because $Z\cong S^5\times S^3$ and the map to $\CP^2$ is the Hopf projection on the $S^5$-factor, there is a natural identification
\[
\widehat Q\cong Q\times S^3.
\]
Give $S^3$ its standard metric.  The product metric on $\widehat Q$ has nonnegative sectional curvature.  Moreover, the $T^2$-action in the Escher--Ziller model is induced by their unitary $U(2)$-action on $S^5\times S^3$ \cite[Proposition~5.5]{EZ14}; on the $S^5$-factor it is along the Hopf fibers, and on the $S^3$-factor it is isometric for the round metric.  Since the metric on $Q$ is invariant under the Hopf circle and under $\SO(r)$, the product metric is invariant under
\[
K=T^2\times\SO(r).
\]
The group $K$ acts freely on \eqref{eq:Qhat-blowup}, with $T^2$ acting on $Z$ and $\SO(r)$ acting on $P_\eta$.  Quotienting by the two factors gives
\begin{equation}\label{eq:hatQ-base}
\widehat Q/K
\cong Z/T^2
\cong M.
\end{equation}
Thus $\widehat Q\to M$ is a compact principal $K$-bundle whose total space has nonnegative sectional curvature.

\medskip
\noindent\emph{Step 3: recover the prescribed vector bundle.}
For each line bundle $L_i=L_{r_i,\ell_i}$ in \eqref{eq:decomposition}, let $\chi_i=\chi_{r_i,\ell_i}$ be the character supplied by Lemma~\ref{lem:characters}.  Consider the orthogonal real $K$-representation
\begin{equation}\label{eq:blowup-representation}
V=\bigoplus_{i=1}^{d}\C_{\chi_i}\oplus\R^r,
\end{equation}
where $T^2$ acts on the complex lines through the characters and trivially on $\R^r$, while $\SO(r)$ acts trivially on the complex lines and in the standard way on $\R^r$.

For a complex summand, the $\SO(r)$-factor acts trivially, so
\[
\widehat Q\times_K\C_{\chi_i}
\cong Z\times_{T^2}\C_{\chi_i}
=L_i.
\]
For the real summand, the $T^2$-factor acts trivially on $\R^r$.  Quotienting first by $T^2$ identifies
\[
\widehat Q/T^2
\cong M\times_{\CP^2}P_\eta,
\]
and hence
\[
\widehat Q\times_K\R^r
\cong
\bigl(M\times_{\CP^2}P_\eta\bigr)\times_{\SO(r)}\R^r
\cong\pi^*\eta.
\]
Associated bundles preserve direct sums.  Therefore
\begin{equation}\label{eq:blowup-associated-identification}
\widehat Q\times_KV
\cong
\bigoplus_{i=1}^{d}(L_i)_{\R}\oplus\pi^*\eta
\cong\xi.
\end{equation}

\medskip
\noindent\emph{Step 4: pass the curvature to the associated bundle.}
Give $V$ its $K$-invariant Euclidean inner product. Lemma~\ref{lem:quotient-curvature}, applied to the compact nonnegatively curved principal bundle $\widehat Q\to M$, gives a complete metric with nonnegative sectional curvature on
\[
\Tot(\xi)\cong\widehat Q\times_KV.
\]
For the sphere bundle, use the unit sphere $S(V)\subset V$ with its standard round metric. The same lemma gives
\[
S(\xi)\cong\widehat Q\times_KS(V)
\]
a metric of nonnegative sectional curvature. This completes the proof of Theorem~\ref{thm:blowup}.

\section{The rank bounds}

The two bounds reflect different parts of the argument. Over $\CP^3$, classification by $p_1$ already holds in rank seven. The odd-parity model then has an auxiliary rank-five bundle over $S^4$, so all of the required constructions are available. In rank six, the Euler class supplies additional unstable information \cite[Proposition~1]{CV92}; matching $p_1$ alone no longer identifies the desired bundle. This is not a failure of the Grove--Ziller lift for rank-four bundles over $S^4$.

Over $M$, classification by $p_1$ likewise holds in rank seven. A decomposition using two line bundles and an arbitrary rank-at-least-five bundle over $\CP^2$ would give the bound nine immediately. Lemma~\ref{lem:arithmetic} improves this: when two line bundles are needed, their weights can be chosen so that the residual integer $C$ is odd or divisible by four. Lemma~\ref{lem:rank4-lift} then supplies a liftable rank-four bundle, giving the bound eight. Thus the existence of exceptional rank-four lifts alone does not justify stopping at nine.

The proof does not settle all rank-seven bundles over $M$. After two realified line summands, the auxiliary bundle would have rank three, where the admissible characteristic classes and commuting-lift conditions are more restrictive \cite[Theorem~C and Corollary~4.11(a)]{GZ11}. We make no claim that either lower-rank conclusion is false. The fixed-rank metric constructions above require neither curvature-preserving cancellation nor an extension of the classification by $p_1$ beyond its stated range.

\end{document}